\newtheorem{theorem}{Theorem}[section]
\newtheorem{definition}[theorem]{Definition}
\newtheorem{lemma}[theorem]{Lemma}
\newtheorem{proposition}[theorem]{Proposition}
\newtheorem{remark}[theorem]{Remark}
\newtheorem{example}[theorem]{Example}
\newtheorem{corollary}[theorem]{Corollary}
\title{Adjoint operations in twist-products of lattices}
\author{Ivan~Chajda and Helmut~L\"anger}
\date{}
\begin{document}

\footnotetext{Support of the research by the Austrian Science Fund (FWF), project I~4579-N, and the Czech Science Foundation (GA\v CR), project 20-09869L, entitled ``The many facets of orthomodularity'', as well as by \"OAD, project CZ~02/2019, entitled ``Function algebras and ordered structures related to logic and data fusion'', and, concerning the first author, by IGA, project P\v rF~2020~014, is gratefully acknowledged.}

\maketitle

\begin{abstract}
Given an integral commutative residuated lattice $\mathbf L=(L,\vee,\wedge)$, its full twist-product $(L^2,\sqcup,\sqcap)$ can be endowed with two binary operations $\odot$ and $\Rightarrow$ introduced formerly by M.~Busaniche and R.~Cignoli as well as by C.~Tsinakis and A.~M.~Wille such that it becomes a commutative residuated lattice. For every $a\in L$ we define a certain subset $P_a(\mathbf L)$ of $L^2$. We characterize when $P_a(\mathbf L)$ is a sublattice of the full twist-product $(L^2,\sqcup,\sqcap)$. In this case $P_a(\mathbf L)$ together with some natural antitone involution $'$ becomes a pseudo-Kleene lattice. If $\mathbf L$ is distributive then $(P_a(\mathbf L),\sqcup,\sqcap,{}')$ becomes a Kleene lattice. We present sufficient conditions for $P_a(\mathbf L)$ being a subalgebra of $(L^2,\sqcup,\sqcap,\odot,\Rightarrow)$ and thus for $\odot$ and $\Rightarrow$ being a pair of adjoint operations on $P_a(\mathbf L)$. Finally, we introduce another pair $\odot$ and $\Rightarrow$ of adjoint operations on the full twist-product of a bounded commutative residuated lattice such that the resulting algebra is a bounded commutative residuated lattice satisfying the double negation law and we investigate when $P_a(\mathbf L)$ is closed under these new operations $\odot$ and $\Rightarrow$.
\end{abstract}

{\bf AMS Subject Classification:} 06D30, 03G10, 03G25, 03G47

{\bf Keywords:} Full twist-product, residuated lattice, Kleene lattice, pseudo-Kleene lattice, double negation law

\section{Introduction}

Kleene lattices were introduced by J.~A.~Kalman (\cite K) (under a different name) as a special kind of De Morgan lattices which serve as an algebraic axiomatization of a certain propositional logic satisfying the double negation law but not necessary the excluded middle law. If the underlying lattice is not distributive such lattices are called pseudo-Kleene (see e.g.\ \cite{Ch}). It is a question if certain binary operations can be introduced in a Kleene or pseudo-Kleene lattice such that they form an adjoint pair. To solve this problem, we apply an approach using the full twist-product construction and another construction extending a distributive lattice into a Kleene one.

Having a residuated lattice $(L,\vee,\wedge,\cdot,\rightarrow,1)$, M.~Busaniche and R.~Cignoli (\cite{BC}) as well as C.~Tsinakis and A.~M.~Wille (\cite{TW}) introduced binary operations $\odot$ and $\Rightarrow$ on the full twist-product $(L^2,\sqcup,\sqcap)$ to be converted into a residuated lattice $(L^2,\sqcup,\sqcap,\odot,\Rightarrow,(1,1))$. It is known that if $\mathbf L=(L,\vee,\wedge,{}')$ is a distributive lattice with an antitone involution, $a\in L$ and $P_a(\mathbf L):=\{(x,y)\in L^2\mid x\wedge y\leq a\leq x\vee y\}$ then $(P_a(\mathbf L),\vee,\wedge,{}')$ is a Kleene lattice. If $\mathbf L$ is not distributive then the situation is different.

Our aim is to combine both of these approaches and hence ask for several questions as follows:
\begin{itemize}
\item When is $(P_a(\mathbf L),\sqcup,\sqcap)$ a sublattice of the full twist-product $(L^2,\sqcup,\sqcap)$, also in the case of a non-distributive lattice $\mathbf L$?
\item When is $P_a(\mathbf L)$ closed under operations $\odot$ and $\Rightarrow$ mentioned above?
\item When can $P_a(\mathbf L)$ be equipped with these operations forming an adjoint pair?
\item Can we define the operations $\odot$ and $\Rightarrow$ in a way different from that of \cite{BC} or \cite{TW} to obtain an integral residuated lattice on the full twist-product $(L^2,\sqcup,\sqcap)$?
\end{itemize}
We answer these question in our paper by giving sufficient and, in some cases, also necessary conditions under which we get a positive solution. Moreover, we present examples showing how our constructions work.

\section{Preliminaries}

We recall several concepts which will be used throughout the paper. Moreover, we recall some results already published on which our present study is based.

Let $\mathbf P=(P,\leq)$ be a poset. An {\em antitone involution} on $\mathbf P$ is a unary operation $'$ on $P$ satisfying
\begin{enumerate}[(i)]
\item $x\leq y$ implies $y'\leq x'$,
\item $x''\approx x$
\end{enumerate}
for all $x,y\in P$. A distributive lattice having an antitone involution is called a {\em De Morgan lattice} or a {\em De Morgan algebra}.

\begin{definition}\label{def1}
A {\em commutative residuated lattice} is an algebra $(L,\vee,\wedge,\cdot,\rightarrow,1)$ of type $(2,2,2,2,0)$ such that
\begin{enumerate}[{\rm(i)}]
\item $(L,\vee,\wedge)$ is a lattice,
\item $(L,\cdot,1)$ is a commutative monoid,
\item for all $x,y,z\in L$, $x\cdot y\leq z$ is equivalent to $x\leq y\rightarrow z$ {\rm(}{\em adjointness property}{\rm)}.
\end{enumerate}
$(L,\vee,\wedge,\cdot,\rightarrow,1)$ is called {\em integral} if $1$ is the top element of the lattice $(L,\vee,\wedge)$. A {\em commutative residuated lattice with $0$} is an algebra $(L,\vee,\wedge,\cdot,\rightarrow,0,1)$ of type $(2,2,2,2,0,0)$ such that $(L,\vee,\wedge,\cdot,\rightarrow,1)$ is a commutative residuated lattice and $0$ is the bottom element of $(L,\vee,\wedge)$. Let $(L,\vee,\wedge,\cdot,\rightarrow,0,1)$ be a commutative residuated lattice with $0$. Define $x':=x\rightarrow0$ for all $x\in L$. $(L,\vee,\wedge,\cdot,\rightarrow,0,1)$ is
\begin{itemize}
\item called a {\em bounded commutative residuated lattice} if $1$ is the top element of $(L,\vee,\wedge)$,
\item said to satisfy the {\em double negation law} if it satisfies the identity $x''\approx x$, i.e. \\
$(x\rightarrow0)\rightarrow0\approx x$.
\end{itemize}
\end{definition}

We say that the operations $\cdot$ and $\rightarrow$ form an {\em adjoint pair} if they satisfy the adjointness (iii) of Definition~\ref{def1}.

The following properties of integral commutative residuated lattices are well-known (cf.\ e.g.\ \cite B).

\begin{proposition}\label{prop1}
Let $(L,\vee,\wedge,\cdot,\rightarrow,1)$ be an integral commutative residuated lattice. \\
Then the following hold for all $x,y,z\in L$:
\begin{enumerate}[{\rm(i)}]
\item $x\leq y$ implies $x\cdot z\leq y\cdot z$,
\item $x\cdot y\leq x,y$,
\item $1\rightarrow x\approx x$,
\item $x\leq y\rightarrow x$,
\item $x\rightarrow y=1$ if and only if $x\leq y$,
\item $x\leq y$ implies $y\rightarrow z\leq x\rightarrow z$,
\item $x\leq y$ implies $z\rightarrow x\leq z\rightarrow y$,
\item $x\rightarrow(y\wedge z)\approx(x\rightarrow y)\wedge(x\rightarrow z)$,
\item $(x\cdot y)\rightarrow z\approx x\rightarrow(y\rightarrow z)$.
\end{enumerate}
\end{proposition}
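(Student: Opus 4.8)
The plan is to derive all nine statements from the adjointness property (iii) of Definition~\ref{def1} together with commutativity, associativity of $\cdot$, the fact that $1$ is the multiplicative unit, and integrality (that $1$ is the top element). The single most useful consequence of adjointness, which I would extract at the outset, is the \emph{counit inequality} $(u\rightarrow v)\cdot u\leq v$: it is just adjointness applied to the trivially valid $u\rightarrow v\leq u\rightarrow v$. Dually, $u\cdot v\leq w$ passes back and forth with $u\leq v\rightarrow w$. Almost every item is then a short manipulation of these two facts.

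I would establish the items essentially in the listed order, since the later ones reuse the earlier. For (i), from $y\cdot z\leq y\cdot z$ adjointness gives $y\leq z\rightarrow(y\cdot z)$; combined with $x\leq y$ and adjointness in the reverse direction this yields $x\cdot z\leq y\cdot z$. Item (ii) is then immediate from integrality: since $x\leq1$, monotonicity (i) and the unit law give $x\cdot y\leq1\cdot y=y$, and symmetrically $x\cdot y\leq x$. For (iii) I would prove both inequalities: $x\leq1\rightarrow x$ comes from $x\cdot1=x\leq x$ via adjointness, and $1\rightarrow x\leq x$ from the counit inequality $(1\rightarrow x)\cdot1\leq x$ together with the unit law. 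Item (iv) follows from (ii), since $x\cdot y\leq x$ rewrites under adjointness to $x\leq y\rightarrow x$; and (v) follows by observing that $x\leq y$ is equivalent, via $1\cdot x=x$ and adjointness, to $1\leq x\rightarrow y$, which by integrality means exactly $x\rightarrow y=1$.

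For the monotonicity of the residuum, (vi) and (vii), I would again reduce to the counit inequality. For (vii), $(z\rightarrow x)\cdot z\leq x\leq y$ already gives $z\rightarrow x\leq z\rightarrow y$ after re-applying adjointness. For (vi), the hypothesis $x\leq y$ together with commutativity and (i) give $(y\rightarrow z)\cdot x\leq(y\rightarrow z)\cdot y\leq z$, whence $y\rightarrow z\leq x\rightarrow z$. The meet law (viii) splits into two directions: ``$\leq$'' follows from (vii) applied to $y\wedge z\leq y$ and $y\wedge z\leq z$, while ``$\geq$'' follows by setting $w:=(x\rightarrow y)\wedge(x\rightarrow z)$ and checking $w\cdot x\leq y$ and $w\cdot x\leq z$ separately, so that $w\cdot x\leq y\wedge z$ and hence $w\leq x\rightarrow(y\wedge z)$.

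The step I expect to need a slightly different format is the exchange law (ix), which I would prove not by two inequalities but by a universal-element argument: for every $w\in L$, associativity and two applications of adjointness give
\[
w\leq(x\cdot y)\rightarrow z\iff(w\cdot x)\cdot y\leq z\iff w\leq x\rightarrow(y\rightarrow z),
\]
and then substituting $w$ equal to each of the two sides forces the two residua to coincide. This is the only place where associativity of the monoid is genuinely used, and the only mild subtlety is the passage from an equivalence valid for all $w$ to an identity; everything else is routine once the counit inequality and adjointness are in hand.
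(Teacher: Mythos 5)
Your proof is correct, and every step checks out: the counit inequality $(u\rightarrow v)\cdot u\leq v$ does follow from adjointness applied to $u\rightarrow v\leq u\rightarrow v$, and each of (i)--(ix) is derived from it and adjointness exactly as you describe. There is, however, nothing in the paper to compare it against: the authors state Proposition~\ref{prop1} as well-known and simply cite the literature (B\v elohl\'avek's book), giving no proof at all. So your argument is not an alternative route but a filling-in of an omitted, standard derivation --- and it is the canonical one. A few fine points worth recording: integrality is genuinely needed only for (ii), (iv) (via (ii)), and (v) (where $x\rightarrow y\leq 1$ requires $1$ to be the top element); your proof of (iii) uses only the unit law and adjointness, so it holds in any commutative residuated lattice. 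Your universal-element argument for (ix), namely that $w\leq(x\cdot y)\rightarrow z$ and $w\leq x\rightarrow(y\rightarrow z)$ are equivalent for all $w$ and hence the two elements coincide by substituting each side for $w$, is legitimate --- it is just the uniqueness of upper adjoints, and the passage from the equivalence to the identity is exactly the two substitutions you indicate, so there is no gap there. Your observation that associativity enters only in (ix) is also accurate for this decomposition (commutativity, by contrast, is used in (ii) and (vi)).
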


Let $\mathbf L=(L,\vee,\wedge)$ be a lattice. By the {\em full twist-product} of $\mathbf L$ is meant the lattice $(L^2,\sqcup,\sqcap)$ where $\sqcup$ and $\sqcap$ are defined as follows:
\begin{align*}
(x,y)\sqcup(z,v) & :=(x\vee z,y\wedge v), \\
(x,y)\sqcap(z,v) & :=(x\wedge z,y\vee v)
\end{align*}
for all $(x,y),(z,v)\in L^2$. Hence $(x,y)\leq(z,v)$ if and only if both $x\leq z$ and $v\leq y$. Assume now that $(L,\vee,\wedge,\cdot,\rightarrow,1)$ is an integral commutative residuated lattice. In Theorem~3.1 in \cite{BC} which is a particular case of Corollary~3.6 in \cite{TW}, Busaniche and Cignoli introduced two additional binary operations $\odot$ and $\Rightarrow$ on its full twist-product $(L^2,\sqcup,\sqcap)$ as follows:
\begin{eqnarray}
& & (x,y)\odot(z,v):=(x\cdot z,(x\rightarrow v)\wedge(z\rightarrow y)),\label{equ1} \\
& & (x,y)\Rightarrow(z,v):=((x\rightarrow z)\wedge(v\rightarrow y),x\cdot v)\label{equ2}
\end{eqnarray}
for all $(x,y),(z,v)\in L^2$. They showed that $(L^2,\sqcup,\sqcap,\odot,\Rightarrow,(1,1))$ is again a commutative residuated lattice, i.e.\ $\odot$ and $\Rightarrow$ form an adjoint pair. For the convenience of the reader we provide a proof since it is not explicitly contained in \cite{BC} and \cite{TW}.

\begin{theorem}\label{th5}
Let $\mathbf L=(L,\vee,\wedge,\cdot,\rightarrow,1)$ be an integral commutative residuated lattice and $\odot$ and $\Rightarrow$ defined by {\rm(\ref{equ1})} and {\rm(\ref{equ2})}, respectively. Then $(L^2,\sqcup,\sqcap,\odot,\Rightarrow,(1,1))$ is a commutative residuated lattice.
\end{theorem}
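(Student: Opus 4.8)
The plan is to verify the three defining clauses of Definition~\ref{def1} for the algebra $(L^2,\sqcup,\sqcap,\odot,\Rightarrow,(1,1))$. Clause~(i) is immediate: $(L^2,\sqcup,\sqcap)$ is a lattice by the very definition of the full twist-product, independently of the residuated structure on $\mathbf L$. The substance of the proof lies in clauses~(ii) and~(iii), both of which reduce to coordinatewise computations in $\mathbf L$ together with the properties collected in Proposition~\ref{prop1}.

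For clause~(ii) I would first observe that commutativity of $\odot$ follows at once from commutativity of $\cdot$ and of $\wedge$ in $\mathbf L$, and that $(1,1)$ is a neutral element because $x\cdot1=x$, $x\rightarrow1=1$ (Proposition~\ref{prop1}(v), as $x\le1$ by integrality) and $1\rightarrow y=y$ (Proposition~\ref{prop1}(iii)), so that $(x,y)\odot(1,1)=(x,1\wedge y)=(x,y)$. The genuine computation is associativity. The first coordinate of both $((x,y)\odot(z,v))\odot(s,t)$ and $(x,y)\odot((z,v)\odot(s,t))$ equals $x\cdot z\cdot s$ by associativity of $\cdot$. For the second coordinate I would expand each side and rewrite every term of the form $u\rightarrow(w\rightarrow p)$ via the exportation law $(u\cdot w)\rightarrow p\approx u\rightarrow(w\rightarrow p)$ of Proposition~\ref{prop1}(ix), together with the distribution of $\rightarrow$ over $\wedge$ in the consequent from Proposition~\ref{prop1}(viii). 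After this rewriting both second coordinates become the same meet of the three terms $(x\cdot z)\rightarrow t$, $(x\cdot s)\rightarrow v$ and $(z\cdot s)\rightarrow y$ (here the commutativity of $\cdot$ is what makes the two expressions literally coincide), so associativity follows.

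Clause~(iii), the adjointness $(x,y)\odot(z,v)\le(s,t)$ if and only if $(x,y)\le(z,v)\Rightarrow(s,t)$, I would handle by unfolding the twist-product order. Using that $(a,b)\le(c,d)$ means $a\le c$ and $d\le b$, the left inequality is equivalent to the conjunction of $x\cdot z\le s$, $t\le x\rightarrow v$ and $t\le z\rightarrow y$, while the right inequality is equivalent to the conjunction of $x\le z\rightarrow s$, $x\le t\rightarrow v$ and $z\cdot t\le y$. Applying the adjointness of $\cdot$ and $\rightarrow$ in $\mathbf L$ (clause~(iii) of Definition~\ref{def1} for $\mathbf L$) together with commutativity of $\cdot$, one checks that $x\cdot z\le s\iff x\le z\rightarrow s$, that $t\le x\rightarrow v\iff x\cdot t\le v\iff x\le t\rightarrow v$, and that $t\le z\rightarrow y\iff z\cdot t\le y$, so the three conditions on the left match the three on the right one by one, establishing the equivalence.

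The main obstacle is the associativity computation for the second coordinate of $\odot$; everything else is either automatic or a direct translation through the adjointness and the arithmetic of $\mathbf L$. The only real care needed there is to apply Proposition~\ref{prop1}(ix) and~(viii) in the correct order and to track the symmetry supplied by commutativity of $\cdot$, after which the two triple meets visibly agree.
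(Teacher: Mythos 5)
Your proposal is correct and takes essentially the same route as the paper's own proof: clause (ii) is settled by normalizing both sides of the associativity equation via Proposition~\ref{prop1}(viii) and (ix) to the common second coordinate $((x\cdot z)\rightarrow t)\wedge((x\cdot s)\rightarrow v)\wedge((z\cdot s)\rightarrow y)$, and clause (iii) by the same coordinatewise chain of equivalences using the adjointness in $\mathbf L$ and commutativity of $\cdot$. No gap to report.
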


\begin{proof}
Let $a,b,c,d,e,f\in L$.
\begin{enumerate}[(i)]
\item It is easy to see that $(L^2,\sqcup,\sqcap)$ is a lattice.
\item We prove that $(L^2,\odot,(1,1))$ is a commutative monoid. Because of (iii), (v), (viii) and (ix) of Proposition~\ref{prop1} we have
\begin{align*}
            (x,y)\odot(z,v) & \approx(x\cdot z,(x\rightarrow v)\wedge(z\rightarrow y))\approx(z\cdot x,(z\rightarrow y)\wedge(x\rightarrow v))\approx \\
                            & \approx(z,v)\odot(x,y), \\
((x,y)\odot(z,v))\odot(t,w) & \approx(x\cdot z,(x\rightarrow v)\wedge(z\rightarrow y))\odot(t,w)\approx \\
                            & \approx((x\cdot z)\cdot t,((x\cdot z)\rightarrow w)\wedge(t\rightarrow((x\rightarrow v)\wedge(z\rightarrow y))))\approx \\
                            & \approx(x\cdot(z\cdot t),((x\cdot z)\rightarrow w)\wedge(t\rightarrow(x\rightarrow v))\wedge(t\rightarrow(z\rightarrow y)))\approx \\								   
														& \approx(x\cdot(z\cdot t),(x\rightarrow(z\rightarrow w))\wedge(x\rightarrow(t\rightarrow v))\wedge((z\cdot t)\rightarrow y))\approx \\
                            & \approx(x\cdot(z\cdot t),(a\rightarrow((z\rightarrow w)\wedge(t\rightarrow v)))\wedge((z\cdot t)\rightarrow y))\approx \\
                            & \approx(x,y)\odot(z\cdot t,(z\rightarrow w)\wedge(t\rightarrow v)\approx \\
														& \approx(x,y)\odot((z,v)\odot(t,w)), \\
            (x,y)\odot(1,1) & \approx(x\cdot1,(x\rightarrow1)\wedge(1\rightarrow y))\approx(x,1\wedge y)\approx(x,y).
\end{align*}
\item Now we prove the adjointness property. The following are equivalent:
\begin{align*}
& (a,b)\odot(c,d)\leq(e,f), \\
& (a\cdot c,(a\rightarrow d)\wedge(c\rightarrow b))\leq(e,f), \\
& a\cdot c\leq e\text{ and }f\leq(a\rightarrow d)\wedge(c\rightarrow b), \\
& a\cdot c\leq e, f\leq a\rightarrow d\text{ and }f\leq c\rightarrow b, \\
& a\leq c\rightarrow e, a\leq f\rightarrow d\text{ and }c\cdot f\leq b, \\
& a\leq(c\rightarrow e)\wedge(f\rightarrow d)\text{ and }c\cdot f\leq b, \\
& (a,b)\leq((c\rightarrow e)\wedge(f\rightarrow d),c\cdot f), \\
& (a,b)\leq(c,d)\Rightarrow(e,f).
\end{align*}   
\end{enumerate}
\end{proof}

It is worth noticing that the operations $\odot$ and $\Rightarrow$ defined above are not independent. Namely one can be expressed by the other by using the antitone involution $'$ defined by $(x,y)':=(y,x)$. Namely,
\begin{align*}
       (x,y)\odot(z,v) & \approx(x\cdot z,(x\rightarrow v)\wedge(z\rightarrow y)\approx((x\rightarrow v)\wedge(z\rightarrow y),x\cdot z)'\approx \\
                       & \approx((x,y)\Rightarrow(v,z))'\approx((x,y)\Rightarrow(z,v)')', \\
(x,y)\Rightarrow (z,v) & \approx((x,y)\Rightarrow(z,v)'')''\approx((x,y)\odot(z,v)')'.
\end{align*}

Moreover, note that the residuated lattice $(L^2,\sqcup,\sqcap,\odot,\Rightarrow,(1,1))$ as defined above is not integral since the top element $(1,0)$ of the full twist-product is different from the neutral element $(1,1)$ of the monoid $(L^2,\odot,(1,1))$.

The following concept was introduced in \cite{Ch} .

A {\em pseudo-Kleene lattice} is an algebra $(L,\vee,\wedge,{}')$ of type $(2,2,1)$ such that the following hold for all $x,y\in L$:
\begin{enumerate}[(i)]
\item $\mathbf L=(L,\vee,\wedge)$ is a lattice,
\item $'$ is an antitone involution on $(L,\leq)$,
\item $x\wedge x'\leq y\vee y'$.
\end{enumerate}
(Here and in the rest of the paper $\leq$ denotes the induced order of the lattice $\mathbf L$.) If, moreover, $\mathbf L$ is distributive then $(L,\vee,\wedge,{}')$ is called a {\em Kleene lattice}.

\section{A construction of pseudo-Kleene lattices in the full twist-product}

Let $\mathbf L=(L,\vee,\wedge)$ be a lattice and $(L^2,\sqcup,\sqcap)$ its full twist-product. It is easy to see that $(L^2,\sqcup,\sqcap)$ is distributive if and only if so is $\mathbf L$. The following construction was introduced for distributive lattices in \cite{Ci} and generalized for posets by the authors in \cite{CL}: Let $a\in L$ and consider the following subset of $L^2$: 
\[
P_a(\mathbf L):=\{(x,y)\in L^2\mid x\wedge y\leq a\leq x\vee y\}.
\]
Since our paper \cite{CL} is devoted to posets and not to lattices, we are going to show that if $(P_a(\mathbf L),\sqcup,\sqcap)$ is a sublattice of $(L^2,\sqcup,\sqcap)$ then $(P_a(\mathbf L),\sqcup,\sqcap,{}')$ where the unary operation $'$ on $P_a(\mathbf L)$ is defined by $(x,y)':=(y,x)$ for all $(x,y)\in P_a(\mathbf L)$ is a pseudo-Kleene lattice.

\begin{theorem}\label{th2}
Let $\mathbf L=(L,\vee,\wedge)$ be a lattice and $a\in L$, assume that $(P_a(\mathbf L),\sqcup,\sqcap)$ is a sublattice of $(L^2,\sqcup,\sqcap)$ and put $(x,y)':=(y,x)$ for all $(x,y)\in L^2$. Then
\begin{enumerate}[{\rm(i)}]
\item $(P_a(\mathbf L),\sqcup,\sqcap,{}')$ is a pseudo-Kleene lattice,
\item the mapping $x\mapsto(x,a)$ is an embedding of $\mathbf L$ into $(P_a(\mathbf L),\sqcup,\sqcap)$,
\item $(P_a(\mathbf L),\sqcup,\sqcap)$ is distributive if and only if so is $\mathbf L$.
\end{enumerate}
\end{theorem}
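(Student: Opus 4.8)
My plan is to verify the three claims in turn, relying on the standing assumption that $(P_a(\mathbf L),\sqcup,\sqcap)$ is a sublattice of the full twist-product.

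First, for part (i), I would check the three defining conditions of a pseudo-Kleene lattice. Condition (i) (that $(P_a(\mathbf L),\sqcup,\sqcap)$ is a lattice) is immediate from the sublattice hypothesis. For condition (ii), I must confirm that $'$ maps $P_a(\mathbf L)$ into itself and is an antitone involution: if $(x,y)\in P_a(\mathbf L)$, so $x\wedge y\leq a\leq x\vee y$, then the defining inequalities are symmetric in $x$ and $y$, hence $(y,x)=(x,y)'\in P_a(\mathbf L)$ as well. That $'$ is involutive is clear since swapping twice is the identity, and it is antitone with respect to the twist-product order $(x,y)\leq(z,v)\iff x\leq z,\ v\leq y$, because this order is reversed under coordinate-swapping. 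The one genuinely substantive point is condition (iii), the Kleene inequality $(x,y)\sqcap(x,y)'\leq(z,v)\sqcup(z,v)'$. Computing both sides, the left-hand side is $(x\wedge y,\,x\vee y)$ and the right-hand side is $(z\vee v,\,z\wedge v)$. The twist-product order then requires $x\wedge y\leq z\vee v$ and $z\wedge v\leq x\vee y$. These follow directly from membership in $P_a(\mathbf L)$: indeed $x\wedge y\leq a\leq z\vee v$ and $z\wedge v\leq a\leq x\vee y$, with $a$ serving as the mediating element. This is the key step, and the role of $a$ as the common ``pivot'' is exactly what makes the pseudo-Kleene inequality hold.

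For part (ii), I would show that $\iota\colon x\mapsto(x,a)$ embeds $\mathbf L$ into $(P_a(\mathbf L),\sqcup,\sqcap)$. First I must check the image lands in $P_a(\mathbf L)$: the condition $x\wedge a\leq a\leq x\vee a$ holds trivially. Injectivity is clear from the first coordinate. For the homomorphism property I would compute $\iota(x)\sqcup\iota(z)=(x,a)\sqcup(z,a)=(x\vee z,\,a\wedge a)=(x\vee z,a)=\iota(x\vee z)$, and dually $\iota(x)\sqcap\iota(z)=(x\wedge z,\,a\vee a)=(x\wedge z,a)=\iota(x\wedge z)$; here the idempotency of $\vee$ and $\wedge$ on the constant second coordinate $a$ does all the work.

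Finally, for part (iii), I would invoke the remark made just before the theorem: the full twist-product $(L^2,\sqcup,\sqcap)$ is distributive if and only if $\mathbf L$ is. Since $(P_a(\mathbf L),\sqcup,\sqcap)$ is a sublattice, if $\mathbf L$ (hence $L^2$) is distributive then so is the sublattice $P_a(\mathbf L)$. Conversely, if $P_a(\mathbf L)$ is distributive, then by part (ii) it contains an isomorphic copy of $\mathbf L$ as a sublattice, and distributivity is inherited by sublattices, so $\mathbf L$ is distributive. I expect no real obstacle here; the main content of the theorem is the Kleene inequality in part (i), and the embedding in part (ii) is what lets the distributivity equivalence close in the converse direction.
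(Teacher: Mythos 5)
Your proposal is correct and follows essentially the same route as the paper: the same pivot argument via $(a,a)$ for the pseudo-Kleene inequality in (i), and the same natural verifications for (ii) and (iii), which the paper merely declares evident. You simply fill in the routine details (closure of $P_a(\mathbf L)$ under $'$, the homomorphism computation for $x\mapsto(x,a)$, and the use of the embedding for the converse of (iii)) that the authors leave to the reader.
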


\begin{proof}
Let $(b,c),(d,e)\in P_a(\mathbf L)$ and $f,g\in L$.
\begin{enumerate}[(i)]
\item The following are equivalent:
\begin{align*}
& (b,c)\leq(d,e), \\
& b\leq d\text{ and }e\leq c, \\
& e\leq c\text{ and }b\leq d, \\
& (e,d)\leq(c,b), \\
& (d,e)'\leq(b,c)'.
\end{align*}
Further, we have $(b,c)''=(c,b)'=(b,c)$. Thus $'$ is an antitone involution on $(P_a(\mathbf L),\sqcup,\sqcap)$. Moreover,
\begin{align*}
(b,c)\sqcap(b,c)' & =(b,c)\sqcap(c,b)=(b\wedge c,c\vee b)\leq(a,a)\leq(d\vee e,e\wedge d)= \\
                  & =(d,e)\sqcup(e,d)=(d,e)\sqcup(d,e)'
\end{align*}
proving that $(P_a(\mathbf L),\sqcup,\sqcap,{}')$ is a pseudo-Kleene lattice.
\item Since we have $(f,a)\leq(g,a)$ if and only if $f\leq g$, it is evident.
\item This can be easily checked.
\end{enumerate}
\end{proof}

In general, $(P_a(\mathbf L),\sqcup,\sqcap)$ need not be a sublattice of $(L^2,\sqcup,\sqcap)$.

\begin{example}
Consider the lattice $\mathbf N_5=(N_5,\vee,\wedge)$ depicted in Figure~1:

\vspace*{-4mm}

\begin{center}
\setlength{\unitlength}{7mm}
\begin{picture}(4,8)
\put(2,1){\circle*{.3}}
\put(1,3){\circle*{.3}}
\put(3,4){\circle*{.3}}
\put(1,5){\circle*{.3}}
\put(2,7){\circle*{.3}}
\put(2,1){\line(-1,2)1}
\put(2,1){\line(1,3)1}
\put(1,3){\line(0,1)2}
\put(2,7){\line(-1,-2)1}
\put(2,7){\line(1,-3)1}
\put(1.85,.25){$0$}
\put(.3,2.85){$a$}
\put(.3,4.85){$b$}
\put(3.4,3.85){$c$}
\put(1.85,7.4){$1$}
\put(1.2,-.75){{\rm Fig.\ 1}}
\end{picture}
\end{center}

\vspace*{4mm}

Then $(a,1),(c,b)\in P_a(\mathbf N_5)$, but $(a,1)\sqcup(c,b)=(a\vee c,1\wedge b)=(1,b)\notin P_a(\mathbf N_5)$ since $1\wedge b=b\not\leq a$. This shows that $P_a(\mathbf N_5)$ is not a sublattice of the full twist-product $(N_5^2,\sqcup,\sqcap)$ of $\mathbf N_5$.
\end{example}

We can give a necessary and sufficient condition for $(P_a(\mathbf L),\sqcup,\sqcap)$ being a sublattice of $(L^2,\sqcup,\sqcap)$.

\begin{theorem}\label{th3}
Let $\mathbf L=(L,\vee,\wedge)$ be a lattice and $a\in L$. Then $(P_a(\mathbf L),\sqcup,\sqcap)$ is a sublattice of $(L^2,\sqcup,\sqcap)$ if and only if the following condition holds for all $x,y,z,v\in L$:
\begin{align*}
& (x\wedge y)\vee(z\wedge v)\leq a\leq(x\vee y)\wedge(z\vee v)\text{ implies} \\
& ((x\vee z)\wedge y\wedge v)\vee(x\wedge z\wedge(y\vee v))\leq a\leq (x\vee z\vee(y\wedge v))\wedge((x\wedge z)\vee y\vee v).
\end{align*}
\end{theorem}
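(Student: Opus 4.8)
The plan is to translate the sublattice condition directly into inequalities about the coordinates. Recall that $(P_a(\mathbf L),\sqcup,\sqcap)$ is a sublattice of $(L^2,\sqcup,\sqcap)$ precisely when $P_a(\mathbf L)$ is closed under both $\sqcup$ and $\sqcap$. So I would fix arbitrary $(x,y),(z,v)\in L^2$ and unravel what membership of these two elements, and of their join and meet, in $P_a(\mathbf L)$ actually means.

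First I would record the two elementary lattice facts that $u\vee w\leq a$ holds if and only if both $u\leq a$ and $w\leq a$, and dually that $a\leq u\wedge w$ holds if and only if both $a\leq u$ and $a\leq w$. Using these, the simultaneous membership $(x,y),(z,v)\in P_a(\mathbf L)$, i.e.\ $x\wedge y\leq a\leq x\vee y$ together with $z\wedge v\leq a\leq z\vee v$, is equivalent to the single inequality $(x\wedge y)\vee(z\wedge v)\leq a\leq(x\vee y)\wedge(z\vee v)$, which is exactly the hypothesis of the stated condition.

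Next I would compute the join and meet from their definitions, namely $(x,y)\sqcup(z,v)=(x\vee z,y\wedge v)$ and $(x,y)\sqcap(z,v)=(x\wedge z,y\vee v)$. Membership of the join in $P_a(\mathbf L)$ then reads $(x\vee z)\wedge y\wedge v\leq a\leq x\vee z\vee(y\wedge v)$, and membership of the meet reads $x\wedge z\wedge(y\vee v)\leq a\leq(x\wedge z)\vee y\vee v$. Applying the same two lattice facts once more, the conjunction of the two lower-bound inequalities collapses to $((x\vee z)\wedge y\wedge v)\vee(x\wedge z\wedge(y\vee v))\leq a$, while the conjunction of the two upper-bound inequalities collapses to $a\leq(x\vee z\vee(y\wedge v))\wedge((x\wedge z)\vee y\vee v)$. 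Together these two are precisely the conclusion of the condition.

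With these equivalences in hand the theorem is immediate in both directions: $P_a(\mathbf L)$ is closed under $\sqcup$ and $\sqcap$ if and only if, for every choice of $x,y,z,v$ making the hypothesis hold, the conclusion holds as well. Since the whole argument is a single chain of equivalences, the only thing that requires care—and the step most likely to conceal a slip—is keeping straight which of the four coordinatewise inequalities arise from the join and which from the meet, and matching each term of the displayed conclusion to the correct one; beyond this bookkeeping there is no genuine obstacle.
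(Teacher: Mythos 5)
Your proposal is correct and follows essentially the same route as the paper's own proof: both translate membership of the two elements, their join, and their meet in $P_a(\mathbf L)$ into coordinatewise inequalities and then collapse the conjunctions using the facts that $u\vee w\leq a$ iff $u,w\leq a$ and dually $a\leq u\wedge w$ iff $a\leq u,w$. Your bookkeeping of which inequalities come from $\sqcup$ and which from $\sqcap$ matches the displayed condition exactly, so there is nothing to fix.
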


\begin{proof}
Let $b,c,d,e\in L$.
\item The following are equivalent:
\begin{align*}
& (b,c),(d,e)\in P_a(\mathbf L), \\
& b\wedge c\leq a\leq b\vee c\text{ and }d\wedge e\leq a\leq d\vee e, \\
& (b\wedge c)\vee(d\wedge e)\leq a\leq(b\vee c)\wedge(d\vee e).
\end{align*}
Moreover, the following are equivalent:
\begin{align*}
& (b,c)\sqcup(d,e)\in P_a(\mathbf L), \\
& (b\vee d,c\wedge e)\in P_a(\mathbf L), \\
& (b\vee d)\wedge c\wedge e\leq a\leq b\vee d\vee(c\wedge e).
\end{align*}
Finally, the following are equivalent:
\begin{align*}
& (b,c)\sqcap(d,e)\in P_a(\mathbf L), \\
& (b\wedge d,c\vee e)\in P_a(\mathbf L), \\
& b\wedge d\wedge(c\vee e)\leq a\leq(b\wedge d)\vee c\vee e.
\end{align*}
\end{proof} 

\begin{corollary}\label{cor1}
Let $\mathbf L=(L,\vee,\wedge)$ be a distributive lattice and $a\in L$. Then $(P_a(\mathbf L),\sqcup,$ $\sqcap)$ is a sublattice of $(L^2,\sqcup,\sqcap)$ and $(P_a(\mathbf L),\sqcup,\sqcap,{}')$ where the antitone involution is given by $(x,y)':=(y,x)$ for all $(x,y)\in P_a(\mathbf L)$ is a Kleene lattice.
\end{corollary}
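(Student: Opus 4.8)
The plan is to deduce the sublattice claim from Theorem~\ref{th3} and then read off the Kleene property from Theorem~\ref{th2}. So the first task is to verify, under the assumption that $\mathbf L$ is distributive, the implication displayed in Theorem~\ref{th3} for arbitrary $x,y,z,v\in L$. From the hypothesis $(x\wedge y)\vee(z\wedge v)\leq a\leq(x\vee y)\wedge(z\vee v)$ I would first extract the four elementary bounds $x\wedge y\leq a$, $z\wedge v\leq a$, $a\leq x\vee y$ and $a\leq z\vee v$, since these are exactly the pieces out of which the conclusion is built.

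For the lower bound in the conclusion I would treat the two joinands separately. Using distributivity, $(x\vee z)\wedge y\wedge v=(x\wedge y\wedge v)\vee(z\wedge y\wedge v)$, and each joinand lies below $x\wedge y$ or below $z\wedge v$, hence below $a$; likewise $x\wedge z\wedge(y\vee v)=(x\wedge z\wedge y)\vee(x\wedge z\wedge v)\leq a$ by the same two bounds. Dually, for the upper bound I would apply the distributive law in its meet-over-join form: $x\vee z\vee(y\wedge v)=(x\vee z\vee y)\wedge(x\vee z\vee v)$, where each meetand dominates $x\vee y$ or $z\vee v$ and therefore lies above $a$, and symmetrically $(x\wedge z)\vee y\vee v=(x\vee y\vee v)\wedge(z\vee y\vee v)\geq a$. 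This establishes the conclusion of the implication, so Theorem~\ref{th3} yields that $(P_a(\mathbf L),\sqcup,\sqcap)$ is a sublattice of $(L^2,\sqcup,\sqcap)$.

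Once closure is secured, the remainder is immediate from the results already in hand. By Theorem~\ref{th2}(i), $(P_a(\mathbf L),\sqcup,\sqcap,{}')$ with $(x,y)':=(y,x)$ is a pseudo-Kleene lattice, and by Theorem~\ref{th2}(iii) the lattice $(P_a(\mathbf L),\sqcup,\sqcap)$ is distributive because $\mathbf L$ is. Since a Kleene lattice is by definition a distributive pseudo-Kleene lattice, this finishes the argument.

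There is no genuine conceptual obstacle here: the corollary is simply the distributive specialization of Theorem~\ref{th3}, and the only real content is matching each of the four joinands or meetands produced by the expansions to the correct one of the four hypothesis bounds. The one point to watch is keeping the bookkeeping straight, in particular applying distributivity in the appropriate dual form for the lower as against the upper bound, so that every term is controlled by $x\wedge y$, $z\wedge v$, $x\vee y$, or $z\vee v$ as required.
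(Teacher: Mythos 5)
Your proposal is correct and follows essentially the same route as the paper: it verifies the condition of Theorem~\ref{th3} by distributive expansion, bounding each resulting term by one of $x\wedge y$, $z\wedge v$, $x\vee y$, $z\vee v$, and then invokes Theorem~\ref{th2} for the pseudo-Kleene property and distributivity of $P_a(\mathbf L)$. The only difference is cosmetic: you bound the two joinands (resp.\ meetands) separately, while the paper expands and bounds the combined expression in a single chain of inequalities.
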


\begin{proof}
If $b,c,d,e\in L$ and
\[
(b\wedge c)\vee(d\wedge e)\leq a\leq(b\vee c)\wedge(d\vee e)
\]
then
\begin{align*}
((b\vee d)\wedge c\wedge e)\vee(b\wedge d\wedge(c\vee e)) & =(b\wedge c\wedge e)\vee(d\wedge c\wedge e)\vee(b\wedge d\wedge c)\vee(b\wedge d\wedge e)\leq \\
                                                          & \leq(b\wedge c)\vee(d\wedge e)\vee(b\wedge c)\vee(d\wedge e)\leq a\leq \\
                                                          & \leq(b\vee c)\wedge(d\vee e)\wedge(b\vee c)\wedge(d\vee e)\leq \\
                                                          & \leq(b\vee d\vee c)\wedge(b\vee d\vee e)\wedge(b\vee c\vee e)\wedge(d\vee c\vee e)\leq \\
                                                          & \leq(b\vee d\vee(c\wedge e))\wedge((b\wedge d)\vee c\vee e).
\end{align*}
The rest of proof follows by Theorem~\ref{th2}.
\end{proof}

The following example shows a distributive lattice $\mathbf L$ having an element $a$ such that $(P_a(\mathbf L),\sqcup,\sqcap)$ is a sublattice of the full twist-product $(L^2,\sqcup,\sqcap)$.

\begin{example}\label{ex1}
Consider the lattice $\mathbf L=(L,\vee,\wedge)$ visualized in Figure~2:

\vspace*{-2mm}

\begin{center}
\setlength{\unitlength}{7mm}
\begin{picture}(1,8)
\put(0,1){\circle*{.3}}
\put(0,3){\circle*{.3}}
\put(0,5){\circle*{.3}}
\put(0,7){\circle*{.3}}
\put(0,1){\line(0,1)6}
\put(-.15,.25){$0$}
\put(.4,2.85){$a$}
\put(.4,4.85){$b$}
\put(-.15,7.4){$1$}
\put(-.8,-.75){{\rm Fig.\ 2}}
\end{picture}
\end{center}

\vspace*{4mm}

If one defines binary operations $\cdot$ and $\rightarrow$ on $L$ by
\[
x\cdot y:=x\wedge y\text{ and }x\rightarrow y:=\left\{
\begin{array}{ll}
1 & \text{if }x\leq y, \\
y & \text{otherwise},
\end{array}
\right.
\]
then $(L,\vee,\wedge,\cdot,\rightarrow,1)$ is an distributive integral commutative residuated lattice. With respect to the binary operations $\odot$ and $\Rightarrow$ defined by {\rm(\ref{equ1})} and {\rm(\ref{equ2})}, respectively, $(P_a(\mathbf L),\sqcup,\sqcap,$ $\odot,\Rightarrow,(0,1),$ $(1,0))$ is a bounded commutative residuated lattice. According to Corollary~\ref{cor1}, $(P_a(\mathbf L),\sqcup,\sqcap)$ is a sublattice if $(L^2,\sqcup,\sqcap)$. The Hasse diagram of $(P_a(\mathbf L),\sqcup,\sqcap)$ is depicted in Figure~3.

\vspace*{-2mm}

\begin{center}
\setlength{\unitlength}{7mm}
\begin{picture}(6,14)
\put(5,1){\circle*{.3}}
\put(3,3){\circle*{.3}}
\put(7,3){\circle*{.3}}
\put(1,5){\circle*{.3}}
\put(5,5){\circle*{.3}}
\put(3,7){\circle*{.3}}
\put(1,9){\circle*{.3}}
\put(5,9){\circle*{.3}}
\put(-1,11){\circle*{.3}}
\put(3,11){\circle*{.3}}
\put(1,13){\circle*{.3}}
\put(5,1){\line(-1,1)2}
\put(5,1){\line(1,1)2}
\put(3,3){\line(-1,1)2}
\put(3,3){\line(1,1)2}
\put(7,3){\line(-1,1)2}
\put(1,5){\line(1,1)4}
\put(5,5){\line(-1,1)4}
\put(1,9){\line(-1,1)2}
\put(1,9){\line(1,1)2}
\put(5,9){\line(-1,1)2}
\put(-1,11){\line(1,1)2}
\put(3,11){\line(-1,1)2}
\put(4.35,.25){$(0,1)$}
\put(1.4,2.85){$(0,b)$}
\put(7.3,2.85){$(a,1)$}
\put(-.6,4.85){$(0,a)$}
\put(5.3,4.85){$(a,b)$}
\put(3.3,6.85){$(a,a)$}
\put(-.6,8.85){$(b,a)$}
\put(5.3,8.85){$(a,0)$}
\put(-2.6,10.85){$(1,a)$}
\put(3.3,10.85){$(b,0)$}
\put(.35,13.45){$(1,0)$}
\put(2.2,-.75){{\rm Fig.\ 3}}
\end{picture} 
\end{center}
\end{example}

\vspace{4mm}

In the following, a special role will play the lattices $P_a(\mathbf L)$ all elements of which are comparable with $(a,a)$. We can characterize them as follows.

\begin{theorem}\label{th1}
Let $\mathbf L=(L,\vee,\wedge)$ be a lattice and $a\in L$. Then the following are equivalent:
\begin{enumerate}[{\rm(i)}]
\item $P_a(\mathbf L)\subseteq\{(x,y)\in L^2\mid(x,y)\text{ is comparable with }(a,a)\}$
\item $P_a(\mathbf L)=\{(x,y)\in L^2\mid(x,y)\text{ is comparable with }(a,a)\}$
\item Every element of $L$ is comparable with $a$ and $a$ is $\vee$-irreducible and $\wedge$-irreducible.
\end{enumerate}
If this is the case then $(P_a(\mathbf L),\sqcup,\sqcap)$ is a sublattice of $(L^2,\sqcup,\sqcap)$.
\end{theorem}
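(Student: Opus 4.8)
The plan is to dispose first of the equivalence of (i) and (ii) and then to prove the two-way implication between (i) and (iii). Writing $C_a$ for the set $\{(x,y)\in L^2\mid(x,y)\text{ is comparable with }(a,a)\}$, I would begin by recording the inclusion $C_a\subseteq P_a(\mathbf L)$, valid for every lattice $\mathbf L$ and every $a$: if $(x,y)\leq(a,a)$ then $x\leq a\leq y$, whence $x\wedge y\leq a\leq x\vee y$, and the case $(a,a)\leq(x,y)$ is symmetric. Since (i) asserts $P_a(\mathbf L)\subseteq C_a$, the two inclusions together give $P_a(\mathbf L)=C_a$, that is (ii); and (ii) trivially implies (i). Thus everything reduces to (i)$\Leftrightarrow$(iii).

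For (i)$\Rightarrow$(iii) the idea is to feed well-chosen elements of $P_a(\mathbf L)$ into the hypothesis $P_a(\mathbf L)\subseteq C_a$. The key observation is that for every $x\in L$ the pair $(x,a)$ lies in $P_a(\mathbf L)$, since $x\wedge a\leq a\leq x\vee a$, and that $(x,a)$ is comparable with $(a,a)$ exactly when $x\leq a$ or $a\leq x$; hence (i) forces every $x$ to be comparable with $a$. For $\vee$-irreducibility I would start from a decomposition $a=x\vee y$: then $x,y\leq a$, so $(x,y)\in P_a(\mathbf L)$, and inspecting the two ways $(x,y)$ can be comparable with $(a,a)$ shows this forces $x=a$ or $y=a$. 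The argument for $\wedge$-irreducibility is dual, starting from $a=x\wedge y$.

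For (iii)$\Rightarrow$(i) I would take an arbitrary $(x,y)\in P_a(\mathbf L)$ and use comparability of each coordinate with $a$ to split into four cases according to the positions of $x$ and $y$ relative to $a$. The two mixed cases, $x\leq a\leq y$ and $a\leq x$ with $y\leq a$, give comparability with $(a,a)$ outright. In the case $x,y\leq a$ the defining inequality $a\leq x\vee y$ forces $x\vee y=a$, so $\vee$-irreducibility yields $x=a$ or $y=a$, either of which makes $(x,y)$ comparable with $(a,a)$; the case $a\leq x,y$ is handled dually using $x\wedge y=a$ and $\wedge$-irreducibility. This establishes $P_a(\mathbf L)\subseteq C_a$, i.e.\ (i).

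Finally, for the sublattice assertion I would use the description $P_a(\mathbf L)=C_a=(L^-\times L^+)\cup(L^+\times L^-)$, where $L^-=\{x\mid x\leq a\}$ and $L^+=\{x\mid a\leq x\}$ cover $L$ by comparability. Closure under $\sqcup$ is a four-case check: when both factors lie on the same side of $(a,a)$ the join stays on that side, while in the two mixed cases the join collapses to one of the two factors and so remains in $P_a(\mathbf L)$. Closure under $\sqcap$ then follows by applying the involution $(x,y)'=(y,x)$, which fixes $(a,a)$, preserves $C_a$, and interchanges $\sqcup$ and $\sqcap$. I expect the only genuinely delicate point to be the choice of witness pairs in (i)$\Rightarrow$(iii)—in particular recognizing that $(x,a)$ detects comparability and that a decomposition of $a$ furnishes a pair in $P_a(\mathbf L)$ detecting irreducibility; the remaining steps are routine case bookkeeping.
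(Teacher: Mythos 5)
Your proposal is correct, and its skeleton coincides with the paper's proof: you get (i)$\Leftrightarrow$(ii) from the unconditional inclusion $C_a\subseteq P_a(\mathbf L)$ (writing, as you do, $C_a$ for the set of pairs comparable with $(a,a)$), you prove (i)$\Rightarrow$(iii) by feeding witness pairs into the hypothesis, and you prove (iii)$\Rightarrow$(i) by the same four-case analysis, with the extremal cases resolved by irreducibility --- note that the paper's terse line ``$b,c<a$ is impossible because of $a\leq b\vee c$'' tacitly invokes $\vee$-irreducibility in exactly the way you spell out via $x\vee y=a$. Your witnesses are mirror images of the paper's: you test $(x,a)$ where the paper tests $(a,b)$, and you argue irreducibility directly from a decomposition $a=x\vee y$ where the paper argues contrapositively from $b,c<a$; both are cosmetic variants. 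The one genuine divergence is the sublattice claim at the end: the paper checks all four positional cases for both $\sqcup$ and $\sqcap$ by direct coordinate computation, whereas you halve the work twice over --- first by observing that in the mixed cases the two elements are comparable through $(a,a)$, so the join is simply the larger factor, and second by transporting $\sqcup$-closure to $\sqcap$-closure via the involution $(x,y)\mapsto(y,x)$, which fixes $(a,a)$, preserves $C_a$, and interchanges the two operations. This duality trick is sound (it is the same swap the paper itself exploits in the proof of Theorem~\ref{th4} to deduce closure under $\Rightarrow$ from closure under $\odot$) and buys a shorter, more conceptual verification; the paper's explicit computation buys only self-containedness. I see no gaps.
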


\begin{proof}
Let $b,c\in L$. \\
(i) and (ii) are equivalent since $\{(x,y)\in L^2\mid(x,y)\text{ is comparable with }(a,a)\}\subseteq P_a(\mathbf L)$. \\
(i) $\Rightarrow$ (iii): \\
Since $(a,a)\leq(a,b)$ or $(a,b)\leq(a,a)$ we have $b\leq a$ or $a\leq b$. If $a=b\vee c$ then $b,c<a$ would imply $(b,c)\in P_a(\mathbf L)$ and $(b,c)\parallel(a,a)$, a contradiction. Hence $a$ is $\vee$-irreducible. If $a=b\wedge c$ then $a<b,c$ would imply $(b,c)\in P_a(\mathbf L)$ and $(b,c)\parallel(a,a)$, a contradiction. Hence $a$ is $\wedge$-irreducible. \\
(iii) $\Rightarrow$ (i): \\
Let $(b,c)\in P_a(\mathbf L)$. Then $b\wedge c\leq a\leq b\vee c$. \\
If $b=a$ then $(b,c)=(a,c)$ is comparable with $(a,a)$. \\
If $c=a$ then $(b,c)=(b,a)$ is comparable with $(a,a)$. \\
$b,c<a$ is impossible because of $a\leq b\vee c$. \\
$a<b,c$ is impossible because of $b\wedge c\leq a$. \\
If $b<a<c$ then $(b,c)\leq(a,a)$. \\
If $c<a<b$ then $(a,a)\leq(b,c)$. \\
Now assume that (ii) holds and let $(b,c),(d,e)\in P_a(\mathbf L)$. \\
If $(b,c),(d,e)\leq(a,a)$ then
\begin{align*}
(b,c)\sqcup(d,e) & =(b\vee d,c\wedge e)\leq(a,a), \\
(b,c)\sqcap(d,e) & =(b\wedge d,c\vee e)\leq(a,a).
\end{align*}
If $(b,c)\leq(a,a)\leq(d,e)$ then
\begin{align*}
(b,c)\sqcup(d,e) & =(b\vee d,c\wedge e)\geq(a,a), \\
(b,c)\sqcap(d,e) & =(b\wedge d,c\vee e)\leq(a,a),
\end{align*}
If $(d,e)\leq(a,a)\leq(b,c)$ then
\begin{align*}
(b,c)\sqcup(d,e) & =(b\vee d,c\wedge e)\geq(a,a), \\
(b,c)\sqcap(d,e) & =(b\wedge d,c\vee e)\leq(a,a).
\end{align*}
If $(a,a)\leq(b,c),(d,e)$ then
\begin{align*}
(b,c)\sqcup(d,e) & =(b\vee d,c\wedge e)\geq(a,a), \\
(b,c)\sqcap(d,e) & =(b\wedge d,c\vee e)\geq(a,a).
\end{align*}
Hence $(P_a(\mathbf L),\sqcup,\sqcap)$ is a sublattice of $(L^2,\sqcup,\sqcap)$.
\end{proof}

\begin{example}
We can see that the lattice $\mathbf L$ and its element $a$ from Example~\ref{ex1} satisfy the conditions of Theorem~\ref{th1} {\rm(iii)}, hence all elements of $P_a(\mathbf L)$ are comparable with the element $(a,a)$, see Figure~3.
\end{example}

\section{Adjoint pairs in $P_a(\mathbf L)$}

Since the element $(1,1)$ of $L^2$ does not belong to $P_a(\mathbf L)$ unless $a=1$, we cannot expect that $(P_a(\mathbf L),\sqcup,\sqcap)$ will be a residuated lattice with respect to operations $\odot$ and $\Rightarrow$ defined by (\ref{equ1}) and (\ref{equ2}), respectively. On the other hand, it would be important to know when $P_a(\mathbf L)$ is closed with respect to $\odot$ and $\Rightarrow$ because then they form an adjoint pair. Hence, if the pseudo-Kleene lattice $P_a(\mathbf L)$ represents a certain logic where $\odot$ is conjunction and $\Rightarrow$ is implication then from the trivial inequality
\[
x\Rightarrow y\leq x\Rightarrow y
\]
we infer by adjointness
\[
(x\Rightarrow y)\odot x\leq y,
\]
in other words, the propositional value of $y$ is at least as high as the propositional values of the conjunction of $x\Rightarrow y$ and $x$. This means that this logic satisfies {\em Modus Ponens} in the fuzzy modification and hence this pseudo-Kleene logic enables deduction.

Now we are ready to state and prove one of our main results.

\begin{theorem}\label{th4}
Let $(L,\vee,\wedge,\cdot,\rightarrow,1)$ be an integral commutative residuated lattice and $a$ an idempotent {\rm(}with respect to $\cdot${\rm)} $\vee$-irreducible and $\wedge$-irreducible element of $L$ which is comparable with every element of $L$  and put $\mathbf L:=(L,\vee,\wedge)$. Then $(P_a(\mathbf L),\sqcup,\sqcap,\odot,\Rightarrow)$ is a subalgebra of $(L^2,\sqcup,\sqcap,\odot,\Rightarrow)$ and hence $\odot$ and $\Rightarrow$ form an adjoint pair if and only if the following two conditions hold for all $x,y\in L$:
\begin{eqnarray}
& & a\cdot x<a\text{ implies }a\cdot x=0,\label{equ3} \\
& & a<x\cdot y\text{ implies }(x\rightarrow a)\wedge(y\rightarrow a)=a.\label{equ4}
\end{eqnarray}
\end{theorem}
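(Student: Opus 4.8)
The plan is to build on Theorem~\ref{th1}. Since $a$ is comparable with every element of $L$ and is both $\vee$-irreducible and $\wedge$-irreducible, condition (iii) of Theorem~\ref{th1} holds, so $(P_a(\mathbf L),\sqcup,\sqcap)$ is already a sublattice of the full twist-product and moreover $P_a(\mathbf L)=\{(x,y)\in L^2\mid(x,y)\text{ is comparable with }(a,a)\}$. Hence only closure under $\odot$ and $\Rightarrow$ remains to be analysed, and once this is settled the adjointness of $\odot$ and $\Rightarrow$ on $P_a(\mathbf L)$ is inherited from Theorem~\ref{th5} by restriction. I would first observe that closure under $\Rightarrow$ need not be treated separately: from the identity $(x,y)\Rightarrow(z,v)\approx((x,y)\odot(z,v)')'$ recorded after Theorem~\ref{th5}, together with the obvious fact that the coordinate swap $'$ maps $P_a(\mathbf L)$ onto itself, closure under $\odot$ at once yields closure under $\Rightarrow$. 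Thus the whole statement reduces to: $P_a(\mathbf L)$ is closed under $\odot$ if and only if (\ref{equ3}) and (\ref{equ4}) hold.

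For sufficiency I would show that the product of two members of $P_a(\mathbf L)$ again lies in $P_a(\mathbf L)$ by a case distinction according to the position of the two factors relative to $(a,a)$; by commutativity of $\odot$ it suffices to treat (a) both factors $\leq(a,a)$, (b) both $\geq(a,a)$, and (c) one below and one above. In all cases the first coordinate of the product is $\leq a$ in (a) and (c) and $\geq a$ in (b), by integrality and idempotency of $a$ (Proposition~\ref{prop1}(i),(ii)); when this first coordinate equals $a$ the product is automatically comparable with $(a,a)$, so the real work is to control the second coordinate in the remaining situations. In case (a) both arrows collapse to $1$ by Proposition~\ref{prop1}(v), so the product lies below $(a,a)$ with no extra hypothesis. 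In case (b), if the first coordinate $x\cdot z$ strictly exceeds $a$, then (\ref{equ4}) gives $(x\rightarrow a)\wedge(z\rightarrow a)=a$, and since the two relevant arrows are bounded by $x\rightarrow a$ and $z\rightarrow a$ via Proposition~\ref{prop1}(vii), the second coordinate is $\leq a$, placing the product above $(a,a)$. Case (c) is the decisive one: here the key is the adjoint reformulation that $a\leq x\rightarrow v$ is equivalent to $a\cdot x\leq v$; one factor of the second coordinate is $\geq a$ by Proposition~\ref{prop1}(iv), and for the other I would split on $a\cdot x$, noting that $a\cdot x=a$ forces $x=a$ and hence the first coordinate to equal $a$ (already handled), while $a\cdot x<a$ invokes (\ref{equ3}) to give $a\cdot x=0\leq v$, i.e.\ $a\leq x\rightarrow v$. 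This is the step I expect to be the main obstacle, since it is where both hypotheses enter simultaneously and where the correct passage through adjointness has to be spotted.

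For necessity I would produce explicit witnesses. To obtain (\ref{equ4}) I assume $a<x\cdot y$; then $a\leq x,y$ by Proposition~\ref{prop1}(ii), so $(x,a),(y,a)\in P_a(\mathbf L)$, and their $\odot$-product equals $(x\cdot y,(x\rightarrow a)\wedge(y\rightarrow a))$. Its first coordinate exceeds $a$, so membership in $P_a(\mathbf L)$ forces the second coordinate to be $\leq a$, while it is always $\geq a$ by Proposition~\ref{prop1}(iv); hence equality, which is (\ref{equ4}). To obtain (\ref{equ3}) I assume $a\cdot x<a$, which forces $x\leq a$, and consider $(x,1),(a,0)\in P_a(\mathbf L)$ (here $0$ denotes the least element of $L$); their product is $(a\cdot x,x\rightarrow0)$, whose first coordinate is strictly below $a$, so membership forces $a\leq x\rightarrow0$, i.e.\ $a\cdot x\leq0$ by adjointness, that is $a\cdot x=0$. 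Finally I would record that, $P_a(\mathbf L)$ being closed under all four operations, the adjointness equivalence of Theorem~\ref{th5} restricts to $P_a(\mathbf L)$, so $\odot$ and $\Rightarrow$ indeed form an adjoint pair there.
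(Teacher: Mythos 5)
Your proposal is correct and follows essentially the same route as the paper's proof: it reduces everything via Theorem~\ref{th1} to closure under $\odot$ (handling $\Rightarrow$ through the swap involution exactly as the paper does), performs the same four-case analysis relative to $(a,a)$, and extracts (\ref{equ3}) and (\ref{equ4}) by the same specializations the paper uses (your witnesses $(x,1),(a,0)$ and $(x,a),(y,a)$ correspond to the paper's choices $e=0$, $d=a$ and $c=e=a$). The only deviations are cosmetic: you organize the argument as sufficiency plus necessity rather than chains of equivalences, and in the mixed case you split on $a\cdot x$ (using $a\cdot x=a\Rightarrow x=a$) instead of on the first coordinate $b\cdot d$, which is a harmless rearrangement of the same computation.
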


\begin{proof}
Let $(b,c),(d,e)\in P_a(\mathbf L)$. According to Theorem~\ref{th1},
\[
P_a(\mathbf L)=\{(x,y)\in L^2\mid (x,y)\text{ is comparable with }(a,a)\}
\]
and we have that $(P_a(\mathbf L),\sqcup,\sqcap)$ is a sublattice of $(L^2,\sqcup,\sqcap)$. Since $P_a(\mathbf L)$ is closed with respect to $'$, it is closed with respect to $\Rightarrow$ if it is closed with respect to $\odot$. Hence, we need only to check when $P_a(\mathbf L)$ is closed with respect to $\odot$.
\begin{enumerate}[(i)]
\item Assume $(b,c),(d,e)\leq(a,a)$. \\
Because of (ii) and (iv) of Proposition~\ref{prop1} we have
\[
(b,c)\odot(d,e)=(b\cdot d,(b\rightarrow e)\wedge(d\rightarrow c))\leq(a,a).
\]
\item Assume $(b,c)\leq(a,a)\leq(d,e)$. \\
Because of (ii) of Proposition~\ref{prop1} we have $b\cdot d\leq a$. \\
If $b\cdot d=a$ then $(b,c)\odot(d,e)=(b\cdot d,(b\rightarrow e)\wedge(d\rightarrow c))$ is comparable with $(a,a)$. \\
If $b\cdot d<a$ then $(b,c)\odot(d,e)=(b\cdot d,(b\rightarrow e)\wedge(d\rightarrow c))$ is comparable with $(a,a)$ \\
\hspace*{1cm} if and only if $a\leq b\rightarrow e$.
\item Assume $(d,e)\leq(a,a)\leq(b,c)$. \\
Because of the commutativity of $\odot$ this case reduces to the previous one.
\item Assume $(a,a)\leq(b,c),(d,e)$. \\
Because of (i) of Proposition~\ref{prop1} we have $a\leq b\cdot d$. \\
If $a=b\cdot d$ then $(b,c)\odot(d,e)=(b\cdot d,(b\rightarrow e)\wedge(d\rightarrow c))$ is comparable with $(a,a)$. \\
If $a<b\cdot d$ then $(b,c)\odot(d,e)=(b\cdot d,(b\rightarrow e)\wedge(d\rightarrow c))$ is comparable with $(a,a)$ \\
\hspace*{1cm} if and only if $(b\rightarrow e)\wedge(d\rightarrow c)\leq a$.
\end{enumerate}
Hence $(P_a(\mathbf L),\sqcup,\sqcap,\odot,\Rightarrow)$ is a subalgebra of $(L^2,\sqcup,\sqcap,\odot,\Rightarrow)$ if and only if the following statements hold:
\begin{enumerate}[(a)]
\item $b,e\leq a\leq c,d$ and $b\cdot d<a$ imply $a\leq b\rightarrow e$.
\item $c,e\leq a\leq b,d$ and $a<b\cdot d$ imply $(b\rightarrow e)\wedge(d\rightarrow c)\leq a$.
\end{enumerate}
Because of (i) and (vii) of Proposition~\ref{prop1}, (a) is equivalent to the following statements:
\begin{align*}
& b\cdot a<a\text{ implies }a\leq b\rightarrow0, \\
& a\cdot b<a\text{ implies }a\cdot b\leq0, \\
& a\cdot b<a\text{ implies }a\cdot b=0, \\
& (\ref{equ3}).
\end{align*}
Moreover, because of (iv) and (vii) of Proposition~\ref{prop1}, (b) is equivalent to the following statements:
\begin{align*}
& a<b\cdot d\text{ implies }(b\rightarrow a)\wedge(d\rightarrow a)\leq a, \\
& a<b\cdot d\text{ implies }(b\rightarrow a)\wedge(d\rightarrow a)=a, \\
& (\ref{equ4}).
\end{align*}
\end{proof}

\begin{corollary}
Let $\mathbf L=(L,\vee,\wedge,\cdot,\rightarrow,1)$ be an integral commutative distributive residuated lattice and $a\in L$ with $a\cdot a=a$ and assume that every element of $P_a(\mathbf L)$ is comparable with $(a,a)$. Then $(P_a(\mathbf L),\sqcup,\sqcap,{}')$ where $(x,y)':=(y,x)$ for all $(x,y)\in P_a(\mathbf L)$ is a Kleene lattice and $\odot$ and $\Rightarrow$ form an adjoint pair if and only if {\rm(\ref{equ3})} and {\rm(\ref{equ4})} hold.
\end{corollary}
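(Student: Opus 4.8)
The plan is to recognize this statement as essentially a specialization of Theorem~\ref{th4} to the distributive case, combined with Corollary~\ref{cor1}; the real work is merely to check that the hypotheses of those results are met under the assumptions given here. First I would translate the standing assumption that every element of $P_a(\mathbf L)$ is comparable with $(a,a)$ into usable form. By the equivalence of conditions (i) and (iii) in Theorem~\ref{th1}, this comparability assumption holds precisely when every element of $L$ is comparable with $a$ and $a$ is both $\vee$-irreducible and $\wedge$-irreducible. Together with the hypothesis $a\cdot a=a$ (idempotency of $a$ with respect to $\cdot$), these are exactly the standing hypotheses required by Theorem~\ref{th4}. Thus no new verification is needed at this point beyond invoking Theorem~\ref{th1}.

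Next I would dispatch the two conclusions separately. For the Kleene-lattice claim, since the underlying lattice $(L,\vee,\wedge)$ is distributive, Corollary~\ref{cor1} applies directly and yields both that $(P_a(\mathbf L),\sqcup,\sqcap)$ is a sublattice of $(L^2,\sqcup,\sqcap)$ and that $(P_a(\mathbf L),\sqcup,\sqcap,{}')$ with $(x,y)':=(y,x)$ is a Kleene lattice; equivalently, the sublattice property also follows from the final assertion of Theorem~\ref{th1}. Note that distributivity enters only here, to upgrade the pseudo-Kleene lattice of Theorem~\ref{th2} to a Kleene lattice; it plays no role whatsoever in the adjointness part.

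For the adjoint-pair claim I would invoke Theorem~\ref{th4}, whose hypotheses we have just verified: $(P_a(\mathbf L),\sqcup,\sqcap,\odot,\Rightarrow)$ is a subalgebra of $(L^2,\sqcup,\sqcap,\odot,\Rightarrow)$ if and only if (\ref{equ3}) and (\ref{equ4}) hold. It then remains to observe that being such a subalgebra is equivalent to $\odot$ and $\Rightarrow$ forming an adjoint pair on $P_a(\mathbf L)$: closure of $P_a(\mathbf L)$ under $\odot$ and $\Rightarrow$ is exactly what makes these total operations on $P_a(\mathbf L)$, and once they are total, the adjointness property (iii) of Definition~\ref{def1} is inherited from $(L^2,\sqcup,\sqcap,\odot,\Rightarrow)$---which is a commutative residuated lattice by Theorem~\ref{th5}---because the induced order on $P_a(\mathbf L)$ is the restriction of that on $L^2$. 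Conversely, for $\odot$ and $\Rightarrow$ to form an adjoint pair on $P_a(\mathbf L)$ they must first be operations there, i.e.\ $P_a(\mathbf L)$ must be closed under them. Hence the two iff-statements coincide.

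The only point requiring care---and it is minor---is the hypothesis-matching in the first step, namely reading off from Theorem~\ref{th1} that the comparability assumption is equivalent to the irreducibility-plus-comparability conditions demanded by Theorem~\ref{th4}. Once this identification is made, the corollary is a direct assembly of Corollary~\ref{cor1} (for the Kleene structure) and Theorem~\ref{th4} (for adjointness), with no further computation.
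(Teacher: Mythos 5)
Your proof is correct and matches the paper's approach: the corollary appears there without proof precisely because it is the direct assembly you describe---Theorem~\ref{th1} translating the comparability hypothesis into the idempotent, irreducible, comparable-with-everything conditions of Theorem~\ref{th4}, Corollary~\ref{cor1} supplying the Kleene structure from distributivity, and Theorem~\ref{th4} supplying the adjointness equivalence. Your explicit observation that closure of $P_a(\mathbf L)$ under $\odot$ and $\Rightarrow$ together with the restricted order inherits the adjointness property from Theorem~\ref{th5} correctly fills in the only step the paper leaves implicit.
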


\begin{example}
Consider the lattice $\mathbf L=(L,\vee,\wedge)$ with element $a$ from Example~\ref{ex1}. One can easily check that $\mathbf L$ satisfies the conditions of Theorem~\ref{th4} and hence $(P_a(\mathbf L),\sqcup,\sqcap,\odot,$ $\Rightarrow)$ is a subalgebra of $(L^2,\sqcup,\sqcap,\odot,\Rightarrow)$.
\end{example}

\begin{lemma}\label{lem1}
Let $\mathbf L=(L,\vee,\wedge,\cdot,\rightarrow,1)$ be a distributive commutative residuated lattice and $a\in L$. Then $(P_a(\mathbf L),\sqcup,\sqcap)$ is a distributive sublattice of the full twist-product $(L^2,\sqcup,\sqcap)$ closed with respect to $\odot$ {\rm(}and hence also with respect to $\Rightarrow${\rm)} if and only if for all $(b,c),(d,e)\in P_a(\mathbf L)$
\[
(b\cdot d)\wedge(b\rightarrow e)\wedge(d\rightarrow c)\leq a\leq(b\cdot d)\vee(b\rightarrow e)\text{ and }a\leq(b\cdot d)\vee(d\rightarrow c).
\]
\end{lemma}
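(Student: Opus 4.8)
The plan is to observe that almost all of the claimed structure is automatic for a distributive $\mathbf L$, leaving closure under $\odot$ as the only genuine content. Indeed, Corollary~\ref{cor1} already guarantees that $(P_a(\mathbf L),\sqcup,\sqcap)$ is a sublattice of the full twist-product, and Theorem~\ref{th2}~(iii) guarantees that this sublattice is distributive. Moreover, the identity $(x,y)\Rightarrow(z,v)\approx\bigl((x,y)\odot(z,v)'\bigr)'$ recorded after Theorem~\ref{th5}, together with the fact that $P_a(\mathbf L)$ is closed under the coordinate swap $'$ (since meet and join are commutative), shows that closure under $\odot$ forces closure under $\Rightarrow$. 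Hence it suffices to characterize when $P_a(\mathbf L)$ is closed under $\odot$.

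First I would fix $(b,c),(d,e)\in P_a(\mathbf L)$ and compute $(b,c)\odot(d,e)=\bigl(b\cdot d,(b\rightarrow e)\wedge(d\rightarrow c)\bigr)$ from (\ref{equ1}). Writing $u:=b\cdot d$ and $v:=(b\rightarrow e)\wedge(d\rightarrow c)$, membership of this pair in $P_a(\mathbf L)$ means exactly $u\wedge v\leq a\leq u\vee v$. The meet inequality $u\wedge v\leq a$ is literally the first displayed inequality of the statement, so nothing has to be done there.

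The one place where distributivity enters is the join inequality $a\leq u\vee v$. Here I would invoke the distributive law to factor $u\vee v=u\vee\bigl((b\rightarrow e)\wedge(d\rightarrow c)\bigr)=\bigl(u\vee(b\rightarrow e)\bigr)\wedge\bigl(u\vee(d\rightarrow c)\bigr)$, so that $a\leq u\vee v$ holds if and only if $a$ lies below each factor, that is, $a\leq(b\cdot d)\vee(b\rightarrow e)$ and $a\leq(b\cdot d)\vee(d\rightarrow c)$. These are precisely the remaining two inequalities of the statement. Quantifying over all $(b,c),(d,e)\in P_a(\mathbf L)$ then yields the stated equivalence in both directions.

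I do not expect a real obstacle: the lemma is essentially a translation of the single condition $(b,c)\odot(d,e)\in P_a(\mathbf L)$ into inequalities, and the only substantive move is the distributive factorization of the join. The point to handle with a little care is simply that, because the right-hand side of the defining condition is a join of $u$ with a \emph{meet}, distributivity is exactly what splits one inequality into the two clean conditions displayed; for a non-distributive $\mathbf L$ this splitting would fail and the three-inequality form would no longer be available.
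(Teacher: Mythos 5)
Your proposal is correct and takes essentially the same route as the paper: both obtain the distributive sublattice from Corollary~\ref{cor1} and Theorem~\ref{th2}, translate $(b,c)\odot(d,e)\in P_a(\mathbf L)$ into $f\wedge i\leq a\leq f\vee i$ with $f=b\cdot d$ and $i=(b\rightarrow e)\wedge(d\rightarrow c)$, and use distributivity to split the join inequality into the two displayed conditions. Your explicit justification that closure under $\odot$ forces closure under $\Rightarrow$ (via the swap involution and the identity $(x,y)\Rightarrow(z,v)\approx\bigl((x,y)\odot(z,v)'\bigr)'$) is a detail the paper leaves implicit, but it is accurate.
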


\begin{proof}
According to Theorem~\ref{th2} and Corollary~\ref{cor1}, $(P_a(\mathbf L),\sqcup,\sqcap)$ is a distributive sublattice of $(L^2,\sqcup,\sqcap)$. Let $(b,c),(d,e)\in P_a(\mathbf L)$ and put $f:=b\cdot d$, $g:=b\rightarrow e$, $h:=d\rightarrow c$ and $i:=g\wedge h$. Then the following are equivalent:
\begin{align*}
(b,c)\odot(d,e) & \in P_a(\mathbf L), \\
          (f,i) & \in P_a(\mathbf L), \\
      f\wedge i & \leq a\leq f\vee i, \\
      f\wedge i & \leq a\leq f\vee(g\wedge h), \\
      f\wedge i & \leq a\leq(f\vee g)\wedge(f\vee h), \\
      f\wedge i & \leq a\leq f\vee g\text{ and }a\leq f\vee h.
\end{align*}
\end{proof}

\begin{corollary}\label{cor2}
Let $\mathbf L=(L,\vee,\wedge,\cdot,\rightarrow,0,1)$ be a distributive bounded commutative residuated lattice and $a$ an atom of $\mathbf L$. Then $(P_a(\mathbf L),\sqcup,\sqcap)$ is a distributive sublattice of the full twist-product $(L^2,\sqcup,\sqcap)$ closed with respect to $\odot$ {\rm(}and hence also with respect to $\Rightarrow${\rm)} if and only if for all $(b,c),(d,e)\in P_a(\mathbf L)$ either {\rm(i)} or {\rm(ii)} hold:
\begin{enumerate}[{\rm(i)}]
\item $(b\cdot d)\wedge(b\rightarrow e)\wedge(d\rightarrow c)=a$,
\item $(b\cdot d)\wedge(b\rightarrow e)\wedge(d\rightarrow c)=0$ and $(a\leq b\cdot d$ or $a\leq(b\rightarrow e)\wedge(d\rightarrow c))$.
\end{enumerate}
\end{corollary}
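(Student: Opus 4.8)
The plan is to deduce the corollary directly from Lemma~\ref{lem1} by exploiting the single extra hypothesis that $a$ is an atom. I would fix a pair $(b,c),(d,e)\in P_a(\mathbf L)$, adopt the abbreviations $f:=b\cdot d$, $g:=b\rightarrow e$, $h:=d\rightarrow c$ and $i:=g\wedge h$ from the proof of Lemma~\ref{lem1}, and then show that for this single pair the condition supplied by that lemma, namely
\[
f\wedge i\leq a\leq f\vee g\quad\text{and}\quad a\leq f\vee h,
\]
is equivalent to the disjunction of (i) and (ii). Quantifying over all pairs then yields the stated equivalence.

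The key tool will be an elementary splitting principle that holds because $a$ is an atom of the distributive lattice $\mathbf L$. First, for any $x\in L$ the meet $a\wedge x$ lies below the atom $a$, hence belongs to $\{0,a\}$, so that either $a\leq x$ or $a\wedge x=0$. Second, for any $x,y\in L$ distributivity gives $a\wedge(x\vee y)=(a\wedge x)\vee(a\wedge y)$, and since a join of elements of $\{0,a\}$ equals $a$ exactly when at least one summand is $a$, we obtain that $a\leq x\vee y$ if and only if $a\leq x$ or $a\leq y$. Applying the first observation to $x=f\wedge i$ shows immediately that the hypothesis $f\wedge i\leq a$ forces $f\wedge i\in\{0,a\}$, which is precisely the dichotomy recorded in (i) and (ii).

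I would then split into these two cases. If $f\wedge i=a$, then $a\leq f$ and $a\leq i\leq g,h$, so the two join-inequalities $a\leq f\vee g$ and $a\leq f\vee h$ hold automatically; the condition of Lemma~\ref{lem1} thus collapses to $f\wedge i=a$, which is case~(i). If instead $f\wedge i=0$, the condition reduces to $a\leq f\vee g$ together with $a\leq f\vee h$; by the splitting principle these are equivalent, respectively, to the statements that $a\leq f$ or $a\leq g$, and that $a\leq f$ or $a\leq h$. A short Boolean distribution then shows that their conjunction is equivalent to the statement that $a\leq f$, or else both $a\leq g$ and $a\leq h$, i.e.\ that $a\leq f$ or $a\leq i$; conjoined with $f\wedge i=0$ this is exactly case~(ii).

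I do not expect a serious obstacle, since everything rests on the atom splitting principle together with the elementary Boolean identity $(P\vee Q)\wedge(P\vee R)=P\vee(Q\wedge R)$. The only points requiring care are ensuring that in the case $f\wedge i=0$ the converse direction genuinely invokes atomicity (mere monotonicity supplies only one implication), and checking that the two cases are exhaustive and compatible with the outer hypothesis $f\wedge i\leq a$, so that the disjunction of (i) and (ii) captures the condition of Lemma~\ref{lem1} exactly, with nothing lost or double-counted.
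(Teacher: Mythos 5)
Your proposal is correct and follows essentially the same route as the paper's own proof: both reduce the statement to the condition of Lemma~\ref{lem1} with the same abbreviations $f,g,h,i$, use the atom hypothesis to split $f\wedge i\leq a$ into the dichotomy $f\wedge i\in\{0,a\}$, and use distributivity to rewrite $a\leq f\vee g$ and $a\leq f\vee h$ as $(a\leq f$ or $a\leq g)$ and $(a\leq f$ or $a\leq h)$, recombining these to $a\leq f$ or $a\leq g\wedge h$. Your explicit verification that case~(i) makes the two join-inequalities automatic, and your care about where atomicity (rather than mere monotonicity) is needed, are exactly the points the paper treats, so there is nothing to add.
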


\begin{proof}
Let $(b,c),(d,e)\in P_a(\mathbf L)$ and put $f:=b\cdot d$, $g:=b\rightarrow e$, $h:=d\rightarrow c$ and $i:=g\wedge h$. According to Lemma~\ref{lem1}, $(P_a(\mathbf L),\sqcup,\sqcap)$ is a distributive sublattice of the full twist-product $(L^2,\sqcup,\sqcap)$ and $(b,c)\odot(d,e)\in P_a(\mathbf L)$ is equivalent to $(f\wedge i\leq a\leq f\vee g$ and $a\leq f\vee h)$. Now $f\wedge i=a$ implies $a\leq f\vee i$. Using the fact that $a$ is an atom of $\mathbf L$ we see that the following are equivalent:
\begin{align*}
                         a & \leq f\vee g, \\
          a\wedge(f\vee g) & =a, \\
(a\wedge f)\vee(a\wedge g) & =a, \\
               a\wedge f=a & \text{ or }a\wedge g=a, \\
                   a\leq f & \text{ or }a\leq g.
\end{align*}
Analogously, $a\leq f\vee h$ is equivalent to $(a\leq f$ or $a\leq h)$. Finally, the following are equivalent:
\begin{align*}
(b,c)\odot(d,e) & \in P_a(\mathbf L), \\
    f\wedge i=a & \text{ or }(f\wedge i=0\text{ and }(a\leq f\text{ or }a\leq g)\text{ and }(a\leq f\text{ or }a\leq h)), \\
    f\wedge i=a & \text{ or }(f\wedge i=0\text{ and }(a\leq f\text{ or }(a\leq g)\text{ and }a\leq h))), \\
    f\wedge i=a & \text{ or }(f\wedge i=0\text{ and }(a\leq f\text{ or }a\leq g\wedge h)).
\end{align*}
\end{proof}

Analogously as in Corollary~\ref{cor2}, we can consider the operation $\Rightarrow$ instead of $\odot$ and prove a similar result.

\begin{lemma}
Let $\mathbf L=(L,\vee,\wedge,\cdot,\rightarrow,0,1)$ be a distributive bounded commutative residuated lattice and $a$ an atom of $\mathbf L$. Then $(P_a(\mathbf L),\sqcup,\sqcap)$ is a distributive sublattice of the full twist-product $(L^2,\sqcup,\sqcap)$ closed with respect to $\Rightarrow$ {\rm(}and hence also with respect to $\odot${\rm)} if and only if for all $(b,c),(d,e)\in P_a(\mathbf L)$ either {\rm(i)} or {\rm(ii)} hold:
\begin{enumerate}[{\rm(i)}]
\item $(b\rightarrow d)\wedge(e\rightarrow c)\wedge(b\cdot e)=a$,
\item $(b\rightarrow d)\wedge(e\rightarrow c)\wedge(b\cdot e)=0$ and $(a\leq(b\rightarrow d)\wedge(e\rightarrow c)$ or $a\leq b\cdot e)$.
\end{enumerate}
\end{lemma}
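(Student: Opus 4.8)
The plan is to mirror the proof of Corollary~\ref{cor2}, replacing $\odot$ by $\Rightarrow$ while keeping track of the fact that now the single product sits in the \emph{second} coordinate rather than the first. First I would invoke Theorem~\ref{th2} together with Corollary~\ref{cor1}: since $\mathbf L$ is distributive, $(P_a(\mathbf L),\sqcup,\sqcap)$ is automatically a distributive sublattice of $(L^2,\sqcup,\sqcap)$, so the only thing genuinely at stake is closure under $\Rightarrow$. The parenthetical equivalence with closure under $\odot$ is then free: $P_a(\mathbf L)$ is symmetric in its two coordinates and hence closed under the involution $(x,y)'=(y,x)$, and the identities $(x,y)\odot(z,v)=((x,y)\Rightarrow(z,v)')'$ and $(x,y)\Rightarrow(z,v)=((x,y)\odot(z,v)')'$ recorded after Theorem~\ref{th5} transfer closure from one operation to the other.

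Next I would fix $(b,c),(d,e)\in P_a(\mathbf L)$ and abbreviate $f:=b\rightarrow d$, $g:=e\rightarrow c$ and $h:=b\cdot e$, so that by (\ref{equ2}) we have $(b,c)\Rightarrow(d,e)=(f\wedge g,h)$. Membership of this pair in $P_a(\mathbf L)$ means $(f\wedge g)\wedge h\leq a\leq(f\wedge g)\vee h$. Using distributivity of $\mathbf L$ I would rewrite the upper bound as $a\leq(f\vee h)\wedge(g\vee h)$, that is, $a\leq f\vee h$ and $a\leq g\vee h$.

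The crux is then the atom hypothesis on $a$. Exactly as in Corollary~\ref{cor2}, for any $u,w\in L$ one has $a\leq u\vee w$ if and only if $(a\wedge u)\vee(a\wedge w)=a$ if and only if $a\leq u$ or $a\leq w$, where the last step uses $a\wedge u,a\wedge w\in\{0,a\}$ because $a$ is an atom. Applying this to the two upper-bound inequalities converts them into ($a\leq f$ or $a\leq h$) and ($a\leq g$ or $a\leq h$); a short Boolean simplification shows this is the same as $a\leq h$ or $a\leq f\wedge g$. For the lower bound, atomicity gives that $f\wedge g\wedge h\leq a$ holds precisely when $f\wedge g\wedge h\in\{0,a\}$. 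When $f\wedge g\wedge h=a$ the upper bound is automatic, since $a=f\wedge g\wedge h\leq(f\wedge g)\vee h$, which yields case (i); when $f\wedge g\wedge h=0$ the upper bound survives as the disjunction just derived, which yields case (ii). Assembling these equivalences reproduces the stated characterization.

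I expect no genuine obstacle here, as the argument is a line-by-line dual of Corollary~\ref{cor2}. The only place demanding care is the bookkeeping of which coordinate carries the product: in $\Rightarrow$ the meet $(b\rightarrow d)\wedge(e\rightarrow c)$ occupies the first coordinate and the product $b\cdot e$ the second, so the roles of the two inequalities $a\leq f\vee h$ and $a\leq g\vee h$ must be tracked correctly through the final Boolean simplification that collapses them into $a\leq b\cdot e$ or $a\leq(b\rightarrow d)\wedge(e\rightarrow c)$.
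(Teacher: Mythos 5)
Your proof is correct and follows essentially the same route as the paper: invoke Theorem~\ref{th2} and Corollary~\ref{cor1} for the distributive sublattice part, compute $(b,c)\Rightarrow(d,e)=\bigl((b\rightarrow d)\wedge(e\rightarrow c),\,b\cdot e\bigr)$, and use the atom property of $a$ (that $a\leq u\vee w$ iff $a\leq u$ or $a\leq w$, and that $u\leq a$ iff $u\in\{0,a\}$) to unwind the membership condition into cases (i) and (ii). The only cosmetic difference is that the paper applies the atom argument once directly to $i\vee h$ with $i:=(b\rightarrow d)\wedge(e\rightarrow c)$, whereas you first distribute $h$ over the meet and apply it twice before recombining, which mirrors Corollary~\ref{cor2} line by line but yields the identical characterization.
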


\begin{proof}
Let $(b,c),(d,e)\in P_a(\mathbf L)$ and put $f:=b\rightarrow d$, $g:=e\rightarrow c$, $h:=b\cdot e$ and $i:=f\wedge g$. According to Theorem~\ref{th2} and Corollary~\ref{cor1}, $(P_a(\mathbf L),\sqcup,\sqcap)$ is a distributive sublattice of the full twist-product $(L^2,\sqcup,\sqcap)$. Now the following are equivalent:
\begin{align*}
(b,c)\odot(d,e) & \in P_a(\mathbf L), \\
(i,h) & \in P_a(\mathbf L), \\
i\wedge h & \leq a\leq i\vee h, \\
i\wedge h & =a\text{ or }(i\wedge h=0\text{ and }a\leq i\vee h), \\
i\wedge h & =a\text{ or }(i\wedge h=0\text{ and }(a\leq i\text{ or }a\leq h)).
\end{align*}
(that $a\leq i\vee h$ is equivalent to $(a\leq i$ or $a\leq h)$ follows like in the proof of Corollary~\ref{cor2}).
\end{proof}

\begin{example}\label{ex2}
Consider the lattice $\mathbf L=(L,\vee,\wedge)$ shown in Figure~4:

\vspace*{-2mm}

\begin{center}
\setlength{\unitlength}{7mm}
\begin{picture}(8,8)
\put(3,1){\circle*{.3}}
\put(1,3){\circle*{.3}}
\put(5,3){\circle*{.3}}
\put(3,5){\circle*{.3}}
\put(7,5){\circle*{.3}}
\put(5,7){\circle*{.3}}
\put(3,1){\line(-1,1)2}
\put(3,1){\line(1,1)4}
\put(1,3){\line(1,1)4}
\put(5,3){\line(-1,1)2}
\put(7,5){\line(-1,1)2}
\put(2.875,.25){$0$}
\put(.3,2.85){$b$}
\put(5.4,2.85){$a$}
\put(2.3,4.85){$c$}
\put(7.4,4.85){$d$}
\put(4.875,7.4){$1$}
\put(3.2,-.75){{\rm Fig.\ 4}}
\end{picture}
\end{center}

\vspace*{4mm}

According to Corollary~\ref{cor1}, $(P_a(\mathbf L),\sqcup,\sqcap)$ is a sublattice of the full twist-product $(L^2,\sqcup,\sqcap)$. The Hasse diagram of $(P_a(\mathbf L),\sqcup,\sqcap)$ is depicted in Figure~5.

\vspace*{-2mm}

\begin{center}
\setlength{\unitlength}{7mm}
\begin{picture}(10,14)
\put(5,1){\circle*{.3}}
\put(3,3){\circle*{.3}}
\put(5,3){\circle*{.3}}
\put(7,3){\circle*{.3}}
\put(1,5){\circle*{.3}}
\put(3,5){\circle*{.3}}
\put(5,5){\circle*{.3}}
\put(7,5){\circle*{.3}}
\put(1,7){\circle*{.3}}
\put(3,7){\circle*{.3}}
\put(5,7){\circle*{.3}}
\put(7,7){\circle*{.3}}
\put(9,7){\circle*{.3}}
\put(3,9){\circle*{.3}}
\put(5,9){\circle*{.3}}
\put(7,9){\circle*{.3}}
\put(9,9){\circle*{.3}}
\put(3,11){\circle*{.3}}
\put(5,11){\circle*{.3}}
\put(7,11){\circle*{.3}}
\put(5,13){\circle*{.3}}
\put(5,1){\line(-1,1)4}
\put(5,1){\line(0,1)2}
\put(5,1){\line(1,1)2}
\put(3,3){\line(0,1)2}
\put(3,3){\line(1,1)2}
\put(5,3){\line(-1,1)4}
\put(5,3){\line(1,1)4}
\put(7,3){\line(-1,1)4}
\put(7,3){\line(0,1)4}
\put(1,5){\line(0,1)2}
\put(1,5){\line(1,1)2}
\put(3,5){\line(1,1)4}
\put(5,5){\line(0,1)4}
\put(7,5){\line(-1,1)4}
\put(1,7){\line(1,1)4}
\put(3,7){\line(0,1)4}
\put(7,7){\line(-1,1)4}
\put(7,7){\line(1,1)2}
\put(9,7){\line(-1,1)4}
\put(9,7){\line(0,1)2}
\put(5,9){\line(1,1)2}
\put(7,9){\line(0,1)2}
\put(9,9){\line(-1,1)4}
\put(3,11){\line(1,1)2}
\put(5,11){\line(0,1)2}
\put(4.35,.25){$(0,1)$}
\put(1.4,2.85){$(0,d)$}
\put(5.3,2.85){$(a,1)$}
\put(7.3,2.85){$(0,c)$}
\put(-.6,4.85){$(b,d)$}
\put(1.4,4.85){$(a,d)$}
\put(5.3,4.85){$(0,a)$}
\put(7.3,4.85){$(a,c)$}
\put(-.6,6.85){$(c,d)$}
\put(1.4,6.85){$(b,a)$}
\put(5.3,6.85){$(a,a)$}
\put(7.3,6.85){$(a,b)$}
\put(9.3,6.85){$(d,c)$}
\put(1.4,8.85){$(c,a)$}
\put(5.3,8.85){$(a,0)$}
\put(7.3,8.85){$(d,a)$}
\put(9.3,8.85){$(d,b)$}
\put(1.4,10.85){$(c,0)$}
\put(5.3,10.85){$(1,a)$}
\put(7.3,10.85){$(d,0)$}
\put(4.35,13.45){$(1,0)$}
\put(4.2,-.75){{\rm Fig.\ 5}}
\end{picture}
\end{center}

\vspace*{4mm}

Define an antitone involution $'$ on $(L,\leq)$ and binary operations $\cdot$ and $\rightarrow$ on $L$ by
\[
\begin{array}{c|cccccc}
x  & 0 & a & b & c & d & 1 \\
\hline
x' & 1 & c & d & a & b & 0
\end{array}
\]
and
\[
x\cdot y=\left\{
\begin{array}{ll}
x\wedge y\wedge b & \text{if }x,y\in\{a,c\}, \\
x\wedge y         & \text{otherwise}
\end{array}
\right.\quad x\rightarrow y=\left\{
\begin{array}{ll}
x'\vee y\vee d & \text{if }x,y\in\{a,c\}, \\
x'\vee y       & \text{otherwise}
\end{array}
\right.
\]
for all $x,y\in L$. Then $(L,\vee,\wedge,\cdot,\rightarrow,1)$ is an integral commutative residuated lattice and $x'=x\rightarrow0$ for all $x\in L$. Hence there holds the double negation law. Since $a$ is neither idempotent with respect to $\cdot$ nor meet-irreducible nor comparable with all elements of $L$, we cannot apply Theorem~\ref{th4}. However, since $\mathbf L$ is distributive, $a$ is atom of $\mathbf L$ and conditions {\rm(i)} and {\rm(ii)} of Corollary~\ref{cor2} are satisfied, $P_a(\mathbf L)$ is closed with respect to $\odot$ and hence also with respect to $\Rightarrow$.
\end{example}

If $\mathbf L$ denotes the lattice from Example~\ref{ex2} then $(c,d),(d,b)\in P_d(\mathbf L)$, but
\begin{align*}
(c,d)\odot(d,b) & =(c\cdot d,(c\rightarrow b)\wedge(d\rightarrow d))=(c\wedge d,(c'\vee b)\wedge(d'\vee d))= \\
                & =(a,(a\vee b)\wedge(b\vee d))=(a,c\wedge1)=(a,c)\notin P_d(\mathbf L)
\end{align*}
since $d\not\leq c=a\vee c$. This shows that $P_d(\mathbf L)$ is not closed with respect to $\odot$ (and hence also not with respect to $\Rightarrow$).

If $\mathbf L$ satisfies the double negation law then, because of (vi) of Proposition~\ref{prop1}, $'$ is an antitone involution on $(L,\leq)$. Two elements $a$ and $b$ of $L$ are said to be {\em orthogonal to each other} (shortly, $a\perp b$) if $a\leq b'$. If $\mathbf L$ satisfies the double negation law then this is equivalent to $b\leq a'$. $(L^2,\sqcup,\sqcap,\odot,\Rightarrow,(0,1),(1,1))$ is said to satisfy the {\em double negation law for orthogonal elements} if $(x,y)''=(x,y)$ for all $(x,y)\in L^2$ with $x\perp y$ where $(x,y)':=(x,y)\Rightarrow(0,1)$ for all $(x,y)\in L^2$.

\begin{theorem}
Let $\mathbf L=(L,\vee,\wedge,\cdot,\rightarrow,0,1)$ be a bounded commutative residuated lattice satisfying the double negation law and $a\in L$. Then the full twist-product $(L^2,\sqcup,\sqcap,\odot,\Rightarrow,$ $ (0,1),(1,1))$ is a commutative residuated lattice with zero-element $(0,1)$ satisfying the double negation law for orthogonal elements.
\end{theorem}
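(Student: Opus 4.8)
The plan is to separate the two assertions. The residuated-lattice structure follows immediately from Theorem~\ref{th5}, so the real work lies in verifying the double negation law for orthogonal elements, which I would handle by an explicit evaluation of the residuated negation $(x,y)':=(x,y)\Rightarrow(0,1)$.

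For the first assertion, Theorem~\ref{th5} already yields that $(L^2,\sqcup,\sqcap,\odot,\Rightarrow,(1,1))$ is a commutative residuated lattice. Since $\mathbf L$ is bounded, $0$ is the bottom and $1$ the top of $L$; hence in the twist order, where $(x,y)\leq(z,v)$ iff $x\leq z$ and $v\leq y$, the element $(0,1)$ is the bottom of the full twist-product. Thus $(L^2,\sqcup,\sqcap,\odot,\Rightarrow,(0,1),(1,1))$ is a commutative residuated lattice with zero-element $(0,1)$, exactly as the definition of a commutative residuated lattice with $0$ requires. Note that the neutral element $(1,1)$ is not the top $(1,0)$, so the statement claims only ``residuated lattice with $0$'' rather than boundedness.

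For the second assertion, I would first read off the negation from (\ref{equ2}). Using $x\rightarrow0=x'$, $1\rightarrow y=y$ and $x\cdot1=x$ one gets $(x,y)'=(x,y)\Rightarrow(0,1)=(x'\wedge y,x)$. Applying the same formula to the pair $(x'\wedge y,x)$ gives $(x,y)''=\bigl((x'\wedge y)'\wedge x,\,x'\wedge y\bigr)$. At this point the double negation law guarantees, via (vi) of Proposition~\ref{prop1}, that $'$ is an antitone involution on $(L,\leq)$, so it obeys the De Morgan identity $(u\wedge v)'=u'\vee v'$; combined with $x''=x$ this gives $(x'\wedge y)'=x\vee y'$, and absorption simplifies $(x\vee y')\wedge x$ to $x$. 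Therefore $(x,y)''=(x,x'\wedge y)$.

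It then remains to observe that $x\perp y$, i.e.\ $x\leq y'$, is under the double negation law equivalent to $y\leq x'$, which is precisely the condition $x'\wedge y=y$. Under this hypothesis the preceding computation collapses to $(x,y)''=(x,y)$, establishing the double negation law for orthogonal elements. No step is a genuine obstacle; the only point requiring care is the De Morgan manipulation for $'$, which is legitimate exactly because the assumed double negation law forces $'$ to be an antitone involution on the base lattice $\mathbf L$.
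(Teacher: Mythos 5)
Your proposal is correct and follows essentially the same route as the paper: both obtain the residuated-lattice structure from Theorem~\ref{th5} (applicable since bounded implies integral) together with the observation that $(0,1)$ is the bottom of the twist order, and both verify the second assertion by explicitly evaluating $(x,y)':=(x,y)\Rightarrow(0,1)=(x'\wedge y,x)$ via (\ref{equ2}). The only cosmetic difference is that you compute $(x,y)''=(x,x'\wedge y)$ in full generality (using the De Morgan identity $(x'\wedge y)'=x\vee y'$, which is legitimate because the double negation law makes $'$ an antitone involution, hence a dual order isomorphism) and then impose $x\perp y$ at the end, whereas the paper applies orthogonality already in the first step to simplify $(x,y)'$ to $(y,x)$ and thereby avoids the De Morgan manipulation altogether.
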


\begin{proof}
If $a,b\in L$ and $a\perp b$ then $b\leq a'$ and hence
\begin{align*}
 (a,b)' & =(a,b)\Rightarrow(0,1)=((a\rightarrow0)\wedge(1\rightarrow b),a\cdot1)=(a'\wedge b,a)=(b,a), \\
(a,b)'' & =(b,a)'=(a,b).
\end{align*}
\end{proof}

\section{An alternative construction of adjoint operations}

In this section we show that the operations $\odot$ and $\Rightarrow$ on the full twist-product $(L^2,\sqcup,\sqcap)$ can be defined also in a way different from (\ref{equ1}) and (\ref{equ2}) such that $(L^2,\sqcup,\sqcap,\odot,\Rightarrow,(0,1),$ $(1,0))$ becomes a bounded commutative residuated lattice. We formulate it as follows.

\begin{theorem}\label{th6}
Let $(L,\vee,\wedge,\cdot,\rightarrow,0,1)$ be a bounded commutative residuated lattice satisfying the double negation law and define $x':=x\rightarrow0$ for all $x\in L$ and
\begin{eqnarray}
& & (x,y)\odot(z,v):=(x\cdot z,(y'\cdot v')'),\label{equ5} \\
& & (x,y)\Rightarrow(z,v):=(x\rightarrow z,(y'\rightarrow v')')\label{equ6}
\end{eqnarray}
for all $(x,y),(z,v)\in L^2$. Then $(L^2,\sqcup,\sqcap,\odot,\Rightarrow,(0,1),(1,0))$ is a bounded commutative residuated lattice satisfying the double negation law.
\end{theorem}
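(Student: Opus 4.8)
The plan is to verify, in turn, the defining clauses of a bounded commutative residuated lattice together with the double negation law, exactly in the spirit of the proof of Theorem~\ref{th5}. The lattice reduct $(L^2,\sqcup,\sqcap)$ is already available, and since the twist order satisfies $(x,y)\leq(z,v)$ iff $x\leq z$ and $v\leq y$, the element $(0,1)$ is its bottom and $(1,0)$ its top; thus the only points requiring work are that $(L^2,\odot,(1,0))$ is a commutative monoid, that $\odot$ and $\Rightarrow$ are adjoint, and that $(x,y)''=(x,y)$ for the negation $(x,y)':=(x,y)\Rightarrow(0,1)$.

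Before any computation I would record the tool that drives the whole argument: because $\mathbf L$ satisfies the double negation law, the operation $x\mapsto x'=x\rightarrow0$ is an antitone involution on $(L,\leq)$, by (vi) of Proposition~\ref{prop1} applied with $z=0$ together with $x''\approx x$; moreover $0'=1$ and $1'=0$. Every second coordinate in (\ref{equ5}) and (\ref{equ6}) has the shape $(\cdots)'$, and it is precisely this involution that lets nested primes be peeled off.

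For the monoid, commutativity of $\odot$ is immediate from commutativity of $\cdot$ and the symmetric occurrence of $y',v'$. The neutrality of $(1,0)$ follows from $x\cdot1=x$ and $(y'\cdot0')'=(y'\cdot1)'=y''=y$. For associativity the key observation is that the second coordinate of $(x,y)\odot(z,v)$ is $(y'\cdot v')'$, whose prime is $y'\cdot v'$ by the involution; consequently both $((x,y)\odot(z,v))\odot(t,w)$ and $(x,y)\odot((z,v)\odot(t,w))$ collapse to $(x\cdot z\cdot t,(y'\cdot v'\cdot w')')$ using only associativity of $\cdot$ in $L$. This is the step where the double negation law is genuinely needed.

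The adjointness is where I expect the only real obstacle, and it is handled by dualising the second coordinate. Unfolding the twist order, $(a,b)\odot(c,d)\leq(e,f)$ means $a\cdot c\leq e$ and $f\leq(b'\cdot d')'$, while $(a,b)\leq(c,d)\Rightarrow(e,f)$ means $a\leq c\rightarrow e$ and $(d'\rightarrow f')'\leq b$. The first coordinates are equivalent by the adjointness in $L$. For the second, applying the antitone involution turns $f\leq(b'\cdot d')'$ into $b'\cdot d'\leq f'$, then adjointness in $L$ gives $b'\leq d'\rightarrow f'$, and applying the involution once more yields $(d'\rightarrow f')'\leq b$; reading the chain backwards gives the converse. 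Finally, for the double negation law one computes $(x,y)'=(x,y)\Rightarrow(0,1)=(x\rightarrow0,(y'\rightarrow1')')=(x',y')$, using $1'=0$ and $y'\rightarrow0=y''=y$, whence $(x,y)''=(x'',y'')=(x,y)$.
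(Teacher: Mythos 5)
Your proposal is correct and follows essentially the same route as the paper's own proof: the bounded lattice reduct with bottom $(0,1)$ and top $(1,0)$, the commutative monoid laws for $\odot$ with unit $(1,0)$ (associativity via peeling $((y'\cdot v')')'=y'\cdot v'$), the identical chain of equivalences for adjointness obtained by dualising the second coordinate, and the computation $(x,y)'=(x,y)\Rightarrow(0,1)=(x',y')$ yielding $(x,y)''=(x,y)$. One small imprecision worth noting: associativity is not \emph{the} step where the double negation law is genuinely needed—it is equally used in the unit law (via $y''=y$), in passing between $b'\leq d'\rightarrow f'$ and $(d'\rightarrow f')'\leq b$ in the adjointness chain, and in the final double-negation verification—but this does not affect the correctness of your argument.
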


\begin{proof}
Let $a,b,c,d,e,f\in L$. Obviously, $(L^2,\sqcup,\sqcap,(0,1),(1,0))$ is a bounded lattice. We have
\begin{align*}
            (x,y)\odot(z,v) & \approx(x\cdot z,(y'\cdot v')')\approx(z\cdot x,(v'\cdot y')')\approx(z,v)\odot(x,y), \\
((x,y)\odot(z,v))\odot(t,w) & \approx(x\cdot z,(y'\cdot v')')\odot(t,w)\approx((x\cdot z)\cdot t,((y'\cdot v')\cdot w')')\approx \\
                            & \approx(x\cdot(z\cdot t),(y'\cdot(v'\cdot w'))')\approx(x,y)\odot(z\cdot t,(v'\cdot w')')\approx \\
                            & \approx(x,y)\odot((z,v)\odot(t,w)), \\
            (x,y)\odot(1,0) & \approx(x\cdot1,(y'\cdot0')')\approx(x,(y'\cdot1)')\approx(x,y'')\approx(x,y).
\end{align*}
Moreover, the following are equivalent:
\begin{align*}
& (a,b)\odot(c,d)\leq(e,f), \\
& (a\cdot c,(b'\cdot d')')\leq(e,f), \\
& a\cdot c\leq e\text{ and }f\leq(b'\cdot d')', \\
& a\cdot c\leq e\text{ and }b'\cdot d'\leq f', \\
& a\cdot c\leq e\text{ and }b'\leq d'\rightarrow f', \\
& a\leq c\rightarrow e\text{ and }(d'\rightarrow f')'\leq b, \\
& (a,b)\leq(c\rightarrow e,(d'\rightarrow f')'), \\
& (a,b)\leq(c,d)\Rightarrow(e,f).
\end{align*}
Finally, we have
\begin{align*}
 (x,y)' & \approx(x,y)\Rightarrow(0,1)\approx(x\rightarrow0,(y'\rightarrow1')')\approx(x',(y'\rightarrow0)')\approx(x',y''')\approx(x',y'), \\
(x,y)'' & \approx(x',y')'\approx(x'',y'')\approx(x,y).
\end{align*}
\end{proof}

\begin{remark}\label{rem1}
Let us note that under the assumptions of Theorem~\ref{th6}, the antitone involution $(x,y)':=(x',y')$ in the full twist-product $L^2$ as well as in $P_a(\mathbf L)$ can be derived in a natural way by $(x,y)'\approx(x,y)\Rightarrow(0,1)$ since
\[
(x,y)\Rightarrow(0,1)\approx(x\rightarrow0,(y'\rightarrow1')')\approx(x',(y'\rightarrow0)')\approx(x',y''')\approx(x',y').
\]
This does not hold if $\odot$ and $\Rightarrow$ are defined by {\rm(\ref{equ1})} and {\rm(\ref{equ2})}, respectively.
\end{remark}
 
\begin{remark}
It is worth noticing that the case when the operations $\odot$ and $\Rightarrow$ are defined by {\rm(\ref{equ5})} and {\rm(\ref{equ6})}, respectively, has an interpretation e.g.\ in {\rm MV}-algebras. Namely, an {\rm MV}-algebra is an algebra $(M,\oplus,\neg,0)$ of type $(2,1,0)$ where $(M,\oplus,0)$ is a commutative monoid, $\neg$ satisfies the identity $\neg\neg x\approx x$ and $\oplus$ and $\neg$ are related by the \L ukasiewicz axiom
\[
\neg(\neg x\oplus y)\oplus y\approx\neg(\neg y\oplus x)\oplus x.
\]
Then $(M,\vee,\wedge)$ becomes a distributive lattice where
\begin{align*}
  x\vee y & :=\neg(\neg x\oplus y)\oplus y, \\
x\wedge y & :=\neg(\neg x\vee\neg y).
\end{align*}
for all $x,y\in M$. {\rm MV}-algebras serve as an algebraic semantics of the many-valued \L ukasiewicz logics, $\oplus$ is interpreted as disjunction and $\rightarrow$ defined by $x\rightarrow y:=\neg x\oplus y$ for all $x,y\in M$ as implication. If we put $x\cdot y:=\neg(\neg x\oplus\neg y)$ for all $x,y\in M$ then $x\rightarrow y\approx\neg(x\cdot\neg y)$
 and $(M,\vee,\wedge,\cdot,\rightarrow,0,1)$ forms a bounded residuated lattice satisfying the double negation law. If we now define $\odot$ and $\Rightarrow$ on the full twist-product $M^2$ by {\rm(\ref{equ5})} and {\rm(\ref{equ6})}, respectively, we obtain
\begin{align*}
      (x,y)\odot(z,v) & \approx(x\cdot z,y\oplus v), \\
(x,y)\Rightarrow(z,v) & \approx(x\rightarrow z,\neg y\cdot v).
\end{align*}
In fact, the lattice $\mathbf L=(L,\vee,\wedge)$ from Example~\ref{ex2} is an {\rm MV}-algebra where $\neg x:=x\rightarrow0$ and $x\oplus y:=\neg(\neg x\cdot\neg y)$ for all $x,y\in L$.
\end{remark}

It was shown in \cite{Ch} for Kleene lattices and in \cite{CL} for pseudo-Kleene lattices $(L\vee,\wedge,{}')$ that there exists at most one element $a$ of $L$ satisfying $a'=a$. If such an element exists in a lattice with an antitone involution, we can prove the following result.

\begin{theorem}
Let $\mathbf L=(L,\vee,\wedge,{}')$ be a lattice with an antitone involution and $a\in L$ with $a'=a$, assume that $(P_a(\mathbf L),\sqcup,\sqcap)$ is a sublattice of $(L^2,\sqcup,\sqcap)$ and $(x,y)'=(x',y')$ for all $(x,y)\in L^2$. Then $(P_a(\mathbf L),\sqcup,\sqcap,{}')$ is a pseudo-Kleene lattice if and only if $\mathbf L$ has this property.
\end{theorem}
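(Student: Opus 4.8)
The plan is to reduce the whole equivalence to condition (iii) in the definition of a pseudo-Kleene lattice, since everything else is already available. By hypothesis $(P_a(\mathbf L),\sqcup,\sqcap)$ is a sublattice of $(L^2,\sqcup,\sqcap)$, hence a lattice, and $\mathbf L=(L,\vee,\wedge,{}')$ is a lattice with antitone involution; so conditions (i) and (ii) of the definition hold for both structures. Thus in each direction of the biconditional the only thing to establish is the single absorbing inequality $p\wedge p'\leq q\vee q'$, and the theorem amounts to showing that this inequality holds in $P_a(\mathbf L)$ if and only if it holds in $\mathbf L$.

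First I would check that the componentwise map $(x,y)'=(x',y')$ is a well-defined antitone involution on $P_a(\mathbf L)$. Closure of $P_a(\mathbf L)$ under ${}'$ is the one place where $a'=a$ is used: if $x\wedge y\leq a\leq x\vee y$, then applying the antitone operation ${}'$ together with the De Morgan laws valid in any lattice with antitone involution (which follow routinely from antitonicity and $x''\approx x$) yields $x'\wedge y'\leq a\leq x'\vee y'$, so $(x',y')\in P_a(\mathbf L)$. Antitonicity of ${}'$ for the induced order $(x,y)\leq(z,v)\iff x\leq z\text{ and }v\leq y$, and the involutive law $(x,y)''=(x,y)$, follow immediately from the corresponding properties of ${}'$ on $L$.

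For the implication ``$\mathbf L$ pseudo-Kleene $\Rightarrow$ $P_a(\mathbf L)$ pseudo-Kleene'' I would compute the relevant meet and join directly:
\[
(x,y)\sqcap(x,y)'=(x\wedge x',y\vee y'),\qquad(z,v)\sqcup(z,v)'=(z\vee z',v\wedge v').
\]
Unfolding the required inequality $(x,y)\sqcap(x,y)'\leq(z,v)\sqcup(z,v)'$ through the twist-product order gives exactly the two scalar inequalities $x\wedge x'\leq z\vee z'$ and $v\wedge v'\leq y\vee y'$, each of which is an instance of property (iii) for $\mathbf L$.

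For the converse I would feed back suitable test elements. For arbitrary $p,q\in L$ the pairs $(p,a)$ and $(q,a)$ always lie in $P_a(\mathbf L)$, since $p\wedge a\leq a\leq p\vee a$. Using $a'=a$ one computes $(p,a)\sqcap(p,a)'=(p\wedge p',a)$ and $(q,a)\sqcup(q,a)'=(q\vee q',a)$, so the pseudo-Kleene inequality in $P_a(\mathbf L)$ forces $(p\wedge p',a)\leq(q\vee q',a)$, whose first coordinate is precisely $p\wedge p'\leq q\vee q'$; as $p,q$ were arbitrary, $\mathbf L$ satisfies (iii). The only step that requires a moment's thought is the selection of these particular witnesses with the fixed second coordinate $a$, which is what lets the second coordinate collapse and isolates the scalar inequality; beyond that the argument is routine coordinate bookkeeping, so I do not expect a genuine obstacle.
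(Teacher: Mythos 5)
Your proposal is correct and follows essentially the same route as the paper's proof: the same closure argument for ${}'$ on $P_a(\mathbf L)$ via $a'=a$ and De Morgan, the same coordinatewise unfolding of $(x,y)\sqcap(x,y)'\leq(z,v)\sqcup(z,v)'$, and the same witnesses $(p,a)$, $(q,a)$ (the paper's $(b,a)$, $(c,a)$) for the converse direction.
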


\begin{proof}
Let $b,c\in L$ and $(d,e),(f,g)\in P_a(\mathbf L)$. We have $(x,y)'\approx(x,y)\Rightarrow(0,1)\approx(x',y')$ as explained in Remark~\ref{rem1}. If $(P_a(\mathbf L),\sqcup,\sqcap,{}')$ is a pseudo-Kleene lattice then $(b,a),(c,a)\in P_a(\mathbf L)$ and hence
\[
(b\wedge b',a\vee a')=(b,a)\sqcap(b',a')=(b,a)\sqcap(b,a)'\leq(c,a)\sqcup(c,a)'=(c,a)\sqcup(c',a')=(c\vee c',a\wedge a'),
\]
i.e.\ $b\wedge b'\leq c\vee c'$ showing that $\mathbf L$ is a pseudo-Kleene lattice. Conversely, assume $\mathbf L$ to be a pseudo-Kleene lattice. Then $d\wedge e\leq a\leq d\vee e$ whence $d'\wedge e'\leq a'\leq d'\vee e'$, i.e.\ $d'\wedge e'\leq a\leq d'\vee e'$ which shows $(d,e)'=(d',e')\in P_a(\mathbf L)$. Hence $P_a(\mathbf L)$ is closed with respect to $'$. Finally, we have
\begin{align*}
(d,e)\sqcap(d,e)' & =(d,e)\sqcap(d',e')=(d\wedge d',e\vee e')\leq(f\vee f',g\wedge g')=(f,g)\sqcup(f',g')= \\
                  & =(f,g)\sqcup(f,g)'
\end{align*}
showing that $(P_a(\mathbf L),\sqcup,\sqcap,{}')$ is a pseudo-Kleene lattice.
\end{proof}

Our next aim is to show when $P_a(\mathbf L)$ is closed under the operation $\odot$ defined by (\ref{equ5}). We prove the following.

\begin{theorem}
Let $\mathbf L=(L,\vee,\wedge,\cdot,\rightarrow,{}',1)$ be a commutative residuated lattice with an antitone involution, let $a\in L$ be idempotent with respect to $\cdot$, $\vee$-irreducible and $\wedge$-irreducible, assume $a'\cdot a'=a'$ and define $\odot$ by {\rm(\ref{equ5})}. Then $P_a(\mathbf L)$ is closed with respect to $\odot$.
\end{theorem}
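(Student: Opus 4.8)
The plan is to pass to the \emph{dual multiplication} $x\oplus y:=(x'\cdot y')'$, so that the operation \eqref{equ5} becomes $(x,y)\odot(z,v)=(x\cdot z,\,y\oplus v)$, and to verify the two inequalities defining $P_a(\mathbf L)$ separately and symmetrically. Concretely, for $(x,y),(z,v)\in P_a(\mathbf L)$ I must show
\[
(x\cdot z)\wedge(y\oplus v)\le a\le(x\cdot z)\vee(y\oplus v).
\]
First I would collect the structural facts. Since $'$ is an antitone involution on the lattice it obeys the De~Morgan laws, and since $\cdot$ is residuated it distributes over $\vee$ in each argument; dualising, $\oplus$ is commutative and distributes over $\wedge$ in each argument. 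By integrality (Proposition~\ref{prop1}(ii)) we have $x\cdot z\le x,z$, and applying $'$ gives the dual bounds $x\oplus z\ge x,z$. Finally $a\cdot a=a$ is assumed, while $a'\cdot a'=a'$ yields $a\oplus a=(a'\cdot a')'=a''=a$; thus $a$ is idempotent for \emph{both} products, which is the crucial point.

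For the lower bound I would start from $a\le x\vee y$ and $a\le z\vee v$. Multiplying, using $a\cdot a=a$ and distributivity of $\cdot$ over $\vee$,
\[
a=a\cdot a\le(x\vee y)\cdot(z\vee v)=(x\cdot z)\vee(x\cdot v)\vee(y\cdot z)\vee(y\cdot v).
\]
Each mixed term is absorbed by $y\oplus v$: indeed $x\cdot v\le v\le y\oplus v$ and $y\cdot z,\,y\cdot v\le y\le y\oplus v$. Hence $a\le(x\cdot z)\vee(y\oplus v)$, as required.

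For the upper bound I would run the order-dual computation. From $x\wedge y\le a$ and $z\wedge v\le a$, the isotonicity of $\oplus$, the identity $a=a\oplus a$, and distributivity of $\oplus$ over $\wedge$,
\[
(x\oplus z)\wedge(x\oplus v)\wedge(y\oplus z)\wedge(y\oplus v)=(x\wedge y)\oplus(z\wedge v)\le a\oplus a=a.
\]
Then $(x\cdot z)\wedge(y\oplus v)$ lies below each of these four meetands — below $x\oplus z$ and $x\oplus v$ because $x\cdot z\le x$, below $y\oplus z$ because $x\cdot z\le z$, and below $y\oplus v$ trivially — whence $(x\cdot z)\wedge(y\oplus v)\le a$. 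The two halves are De~Morgan duals of one another, the involution $'$ interchanging $(\vee,\cdot,\,a\cdot a=a)$ with $(\wedge,\oplus,\,a\oplus a=a)$.

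I expect the only delicate points to be the bookkeeping in the two distributive expansions and the verification that every off-diagonal term is dominated by $y\oplus v$ (respectively that $(x\cdot z)\wedge(y\oplus v)$ sits below each dual meetand); both rest solely on integrality and on the double idempotency of $a$. It is worth recording that this argument uses neither the $\vee$-irreducibility nor the $\wedge$-irreducibility of $a$: once one works with $\oplus$ and exploits $a\cdot a=a=a\oplus a$, those hypotheses, although assumed, play no role here.
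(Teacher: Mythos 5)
Your proof is correct, but it takes a genuinely different route from the paper's. The paper first invokes Theorem~\ref{th1} to identify $P_a(\mathbf L)$ with the set of pairs comparable with $(a,a)$ --- this is where $\vee$- and $\wedge$-irreducibility enter (together with, strictly speaking, comparability of $a$ with every element of $L$, which condition (iii) of Theorem~\ref{th1} requires) --- and then runs a case analysis on the positions of $(b,c)$ and $(d,e)$ relative to $(a,a)$, checking in each case that the product is again comparable with $(a,a)$ via $a\cdot a=a$, $a'\cdot a'=a'$ and Proposition~\ref{prop1}(i),(ii). You instead verify the defining inequalities $(x\cdot z)\wedge(y\oplus v)\leq a\leq(x\cdot z)\vee(y\oplus v)$ head-on, using that $\cdot$ distributes over $\vee$ (it is a left adjoint), that $'$ is a dual lattice automorphism so $\oplus$ distributes over $\wedge$, and the double idempotency $a\cdot a=a=a\oplus a$; every individual step checks out (in particular $v\leq y\oplus v$ and $x\leq x\oplus z$ are the involution-duals of integrality, and both four-term expansions are legitimate). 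What your argument buys is a strictly stronger theorem: as you observe, irreducibility and comparability play no role, so closure under $\odot$ defined by (\ref{equ5}) holds for \emph{any} element $a$ with $a\cdot a=a$ and $a'\cdot a'=a'$; as a bonus this sidesteps a small imprecision in the paper, whose stated hypotheses omit the comparability of $a$ with every element of $L$ that its appeal to Theorem~\ref{th1} actually needs. What the paper's case analysis buys is extra structural information in its setting --- the product is not merely in $P_a(\mathbf L)$ but is itself comparable with $(a,a)$, consistent with the description of $P_a(\mathbf L)$ in Theorem~\ref{th1}. One caution for the record: like the paper's proof, you use $x\cdot y\leq x,y$ from Proposition~\ref{prop1}(ii), which presupposes integrality; the theorem's signature does not say ``integral'' explicitly, but since the paper's own proof makes the same tacit assumption, you are on equal footing there.
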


\begin{proof}
Let $(b,c),(d,e)\in P_a(\mathbf L)$. We have
\[
(x,y)\odot(z,v)\approx(x\cdot z,(y'\cdot v')')\approx(z\cdot x,(v'\cdot y')')\approx(z,v)\odot(x,y).
\]
According to Theorem~\ref{th1},
\[
P_a(\mathbf L)=\{(x,y)\in L^2\mid(x,y)\text{ is comparable with }(a,a)\}.
\]
In the following we often use (i) and (ii) of Proposition~\ref{prop1}.
\begin{enumerate}[(i)]
\item Assume $(b,c)\leq(a,a)$. \\
We have $b\cdot d\leq b\leq a$ and every one of the following statements implies the next one:
\begin{align*}
         a & \leq c, \\
        c' & \leq a', \\
c'\cdot e' & \leq a', \\
         a & \leq(c'\cdot e')'.
\end{align*}
This shows $(b,c)\odot(d,e)=(b\cdot d,(c'\cdot e')')\leq(a,a)$.
\item Assume $(d,e)\leq(a,a)$. \\
Then $(b,c)\odot(d,e)=(d,e)\odot(b,c)\leq(a,a)$.
\item Assume $(a,a)\leq(b,c),(d,e)$. \\
Then $c,e\leq a$ and hence $a'\leq c',e'$ whence $a'=a'\cdot a'\leq c'\cdot a'\leq c'\cdot e'$ from which we conclude $(c'\cdot e')'\leq a$. Because of $a\leq b,d$ we have $a=a\cdot a\leq b\cdot a\leq b\cdot d$. Together we obtain $(a,a)\leq(b\cdot d,(c'\cdot e')')=(b,c)\odot(d,e)$.
\end{enumerate}
\end{proof}

Unfortunately, $P_a(\mathbf L)$ is not closed under $\Rightarrow$ defined by (C2) provided $L$ in non-trivial, i.e.\ if it has more than one element.

\begin{theorem}
Let $(L,\vee,\wedge,\cdot,\rightarrow,{}',0,1)$ be a bounded commutative residuated lattice with an antitone involution and $a\in L$ and put $(x,y)\Rightarrow(z,v):=(x\rightarrow z,(y'\rightarrow v')')$ for all $(x,y),(z,v)\in L^2$. Then $P_a(\mathbf L)$ is closed with respect to $\Rightarrow$ if and only if $|L|=1$.
\end{theorem}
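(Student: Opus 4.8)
The plan is to prove the equivalence by disposing of the trivial direction directly and establishing the substantive direction by contraposition. If $|L|=1$ then $L^2$, and hence $P_a(\mathbf L)$, is a singleton, so closure under $\Rightarrow$ is automatic; this settles the ``if'' part. For the converse I would assume $|L|>1$, equivalently $0\neq1$, and exhibit two elements of $P_a(\mathbf L)$ whose image under $\Rightarrow$ leaves $P_a(\mathbf L)$. Throughout I would use only the membership criterion $(x,y)\in P_a(\mathbf L)$ iff $x\wedge y\leq a\leq x\vee y$ together with a handful of identities valid in any bounded (hence integral) commutative residuated lattice with antitone involution: $1\rightarrow x=x$, $0\rightarrow x=1$, $1\rightarrow0=0$, $x''=x$, $0'=1$, $1'=0$, and $a'\rightarrow a'=1$ (the last by Proposition~\ref{prop1}(v)).

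First I would record which ``extreme'' pairs always lie in $P_a(\mathbf L)$: namely $(1,a)$, $(0,a)$ and $(0,1)$ belong to $P_a(\mathbf L)$ for every $a$, whereas $(0,0)\in P_a(\mathbf L)$ forces $a=0$ and $(1,1)\in P_a(\mathbf L)$ forces $a=1$. The crux is then a case split on whether $a=0$. For $a\neq0$ I would take the pair $(1,a),(0,a)\in P_a(\mathbf L)$ and compute
\[
(1,a)\Rightarrow(0,a)=\big(1\rightarrow0,\,(a'\rightarrow a')'\big)=(0,0).
\]
Since $a\neq0$ gives $a\not\leq0$, the element $(0,0)$ fails the requirement $a\leq0\vee0$, so it is not in $P_a(\mathbf L)$ and closure fails.

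For the remaining case $a=0$, where $P_0(\mathbf L)=\{(x,y)\mid x\wedge y=0\}$, the witness $(0,0)$ is now \emph{inside} $P_0(\mathbf L)$, so a different pair is needed; this is the only genuine subtlety of the argument. Here I would use $(0,0),(0,1)\in P_0(\mathbf L)$ and compute
\[
(0,0)\Rightarrow(0,1)=\big(0\rightarrow0,\,(0'\rightarrow1')'\big)=\big(1,\,(1\rightarrow0)'\big)=(1,1).
\]
Because $|L|>1$ we have $1\wedge1=1\neq0$, so $(1,1)\notin P_0(\mathbf L)$ and again closure fails. Combining the two cases shows that $|L|>1$ always breaks closure, which is the contrapositive of the desired ``only if'' direction. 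The main obstacle is essentially bookkeeping: one must notice that the counterexample pair has to be chosen to dodge the degeneracy $(0,0)\in P_0(\mathbf L)$ occurring at $a=0$. Once the two witnessing pairs are identified, every verification reduces to a one-line evaluation of $\Rightarrow$ via the identities listed above.
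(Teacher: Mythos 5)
Your proof is correct and uses essentially the same computations as the paper: the paper evaluates $(1,a)\Rightarrow(0,a)=(0,0)$ and $(a,0)\Rightarrow(a,1)=(1,1)$ (your second witness is exactly the latter specialized to $a=0$) and, from $(0,0),(1,1)\in P_a(\mathbf L)$, reads off $a\leq 0\vee 0=0\leq 1=1\wedge 1\leq a$, hence $0=1$, with no case distinction. Your contrapositive case split on $a=0$ versus $a\neq 0$ is sound but avoidable, since using both witness pairs simultaneously squeezes $a$ between $1$ and $0$ directly.
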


\begin{proof}
Assume $P_a(\mathbf L)$ to be closed with respect to $\Rightarrow$. Since $(0,a),(a,0),(a,1),(1,a)\in P_a(\mathbf L)$ we have
\begin{align*}
(0,0) & =(0,1')=(1\rightarrow0,(a'\rightarrow a')')=(1,a)\Rightarrow(0,a)\in P_a(\mathbf L), \\
(1,1) & =(1,0')=(1,(1\rightarrow0)')=(a\rightarrow a,(0'\rightarrow1')')=(a,0)\Rightarrow(a,1)\in P_a(\mathbf L)
\end{align*}
whence $a\leq0\vee0=0\leq1=1\wedge1\leq a$ and therefore $0=1$, i.e.\ $|L|=1$.
\end{proof}

Authors' addresses:

Ivan Chajda \\
Palack\'y University Olomouc \\
Faculty of Science \\
Department of Algebra and Geometry \\
17.\ listopadu 12 \\
771 46 Olomouc \\
Czech Republic \\
ivan.chajda@upol.cz

Helmut L\"anger \\
TU Wien \\
Faculty of Mathematics and Geoinformation \\
Institute of Discrete Mathematics and Geometry \\
Wiedner Hauptstra\ss e 8-10 \\
1040 Vienna \\
Austria, and \\
Palack\'y University Olomouc \\
Faculty of Science \\
Department of Algebra and Geometry \\
17.\ listopadu 12 \\
771 46 Olomouc \\
Czech Republic \\
helmut.laenger@tuwien.ac.at

\end{document}